\newcommand{\Kr}{$\mathcal{K}=(K, \leq, D, \Vdash)$} 
\numberwithin{equation}{section} 
\newtheorem{theorem}{Theorem}[section]
\newaliascnt{lemma}{theorem}  
\newtheorem{lemma}[lemma]{Lemma}  
\newaliascnt{definition}{theorem}  
\newtheorem{definition}[definition]{Definition}  
\newaliascnt{corollary}{theorem}  
\newtheorem{corollary}[corollary]{Corollary}  
\newaliascnt{proposition}{theorem}  
\newtheorem{proposition}[proposition]{Proposition}  
\newaliascnt{remark}{theorem}  
\newtheorem{remark}[remark]{Remark}  
\newaliascnt{notation}{theorem}  
\newaliascnt{example}{theorem}  
\newaliascnt{conjecture}{theorem}  
\newaliascnt{question}{theorem}  
\newtheorem{question}[question]{Question}  
\newaliascnt{fact}{theorem}  
\newaliascnt{claim}{theorem}  
\def\kcal{\mathcal{K}}
\def\HA{\hbox{\sf HA}{} }
\def\PA{\hbox{\sf PA}{} }
\def\K4{\hbox{\sf K4}{} }
\def\lcal{\mathcal{L}}
\newcommand{\ra }{\rightarrow }
\tikzset{
	modal/.style={>=stealth',shorten >=1pt,shorten <=1pt,auto,node distance=1.5cm,
		semithick},
	world/.style={circle,draw,minimum size=0.5cm,fill=gray!15},
	point/.style={circle,draw,inner sep=0.5mm,fill=black},
	reflexive above/.style={->,loop,looseness=7,in=120,out=60},
	reflexive below/.style={->,loop,looseness=7,in=240,out=300},
	reflexive left/.style={->,loop,looseness=7,in=150,out=210},
	reflexive right/.style={->,loop,looseness=7,in=30,out=330}
}
\tikzstyle{world}=[circle,draw,inner sep=0pt,minimum size=25pt]
\tikzstyle{yworld}=[circle,fill=yellow,draw,inner sep=0pt,minimum size=25pt]
\begin{document}
\setstcolor{red}
\title{Localizing Finite-Depth Kripke Models}
\author{
Mojtaba Mojtahedi\thanks{mojtaba.mojtahedi@ut.ac.ir}\\
School of Mathematics, Statistics and Computer Science\\
College of Science, University of Tehran
 }

\maketitle

\begin{abstract}
We can  look at a first-order   (or propositional)  intuitionistic Kripke model 
 as an ordered set of classical models. 
In this paper, we show that for a 
finite-depth  Kripke model in an arbitrary first-order language or propositional language, 
local (classical) truth of a formula is equivalent to non-classical truth 
(truth in the Kripke semantics) of a Friedman's translation 
of that formula, i.e.  $ \alpha\Vdash A^\rho \Leftrightarrow \mathfrak{M}_\alpha\models A$. 
We introduce some applications of this fact.  
We extend the result of \cite{AH2} and show that semi-narrow
 Kripke models of Heyting Arithmetic $ \HA $ are locally $ \PA $. 
\end{abstract}

{\em keywords:} Intuitionistic logic, Kripke models, local truth,  finite depth.

\section{Introduction}\label{Int}
D. van Dalen et al. in \cite{DKMV} introduced a very useful  technique, called {\em pruning} of a Kripke model,  for studying Kripke 
semantics of $\HA$. Their method is a correspondence between forcing of Friedman's 
translation of a sentence in a Kripke model, and forcing of that sentence in a sub-model  
(in the sense of \cite{Visser-submodels})  of the same Kripke model. 
By this method, they proved that every finite Kripke model of \HA is $\PA$-normal,  
and every $\omega$-frame Kripke model of \HA is locally $\PA$ for infinitely many nodes of the model. 
Then K.\,F.\,Wehmeier in \cite{Weh96} strengthened this result to a wider class of Kripke models, e.g., finite-depth Kripke models, and some special infinite Kripke models.
Ardeshir and Hesaam in \cite{AH2} showed that every rooted narrow 
tree-frame Kripke model of $\HA$ is locally $\PA$.
In this paper, by  iterated use of 
the pruning lemma introduced in \cite{DKMV}, we show that for 
any node $\alpha$ of a  finite depth  Kripke model,  
there exists a sentence $\rho$, such that  for all formula 
$ A $ 
\begin{center}
$\alpha\Vdash A^\rho$ if and only if  $\alpha\vDash A $,
\end{center}
where $A^\rho$ is Friedman's translation of $A$ by $\rho$.

\section{Definitions, conventions and preliminaries}\label{pre}
The propositional language $\lcal_0$ contains $\{\vee, \wedge, \ra,\bot\}$ and atomic variables $p_1,p_2,\ldots$.
The language $\lcal_1$ is the first-order language, 
i.e. as connectives contains $\{\vee, \wedge, \ra,\bot\}$, and 
quantifiers $\{\forall,\exists\}$, plus some function symbols,  
relation symbols, a special equality symbol $ = $, constant symbols and variables $ x,y,z,\dots$.
We use $\neg A$ as an abbreviation for $A\ra\bot$.  
The language of arithmetic $\lcal_{\sf a}$ contains $ \{+,.\}$, $\{=,<\}$ and $\{0,1\}$ as two function symbols, predicate symbols and  constant symbols, respectively. 
For arbitrary set $D$, we  use the notation 
$\lcal(D)$ as the language $\lcal$ augmented with the new set of constant symbols $D$.
We  use  $\vdash$ and $\vdash^c$,  for  intuitionistic and classical deductions, respectively. 

A Kripke model for  a   language $\lcal$ is a quadruples 
$\mathcal{K}=(K, \leq, D, \Vdash)$ with the following properties:
\begin{itemize}
	\item  $K$ is a non-empty set (of {\em nodes}), and  $ (K, \leq) $ is a poset,
	\item $D$ is a function (the domain function) 
	      from $K$ such that $D(\alpha)$ is non-empty, 
	\item For all $\alpha\leq\beta\in K$, we have $D(\alpha)\subseteq D(\beta)$,
	\item $\Vdash$ is a binary relation with first component in $ K$ and second component in 
the set of atomic sentences in $\lcal(D(\alpha))$,
    \item For all 
$\alpha\leq\beta$ and atomic sentence $A$ in $\mathcal{L}(D(\alpha))$, if  
$\alpha\Vdash A$ then $\beta\Vdash A$ ({\em monotonicity}).
\end{itemize}

We can extend $\Vdash$ to all sentences in the language $\lcal$  recursively, just like 
classical case, except for  $\rightarrow$ and $ \forall $ by the following items:
\begin{itemize}
\item $\alpha \Vdash A\ra B$ iff for all $\beta\geq\alpha$ , if  $\beta\Vdash A$ then 
$\beta\Vdash B$,
\item $ \alpha\Vdash \forall{x}\, A $ iff for all $\beta\geq\alpha$ and $ b\in D(\beta) $, we have  
$ \beta\Vdash A[x:b] $.
\end{itemize}

By this definition we can assign to each Kripke model {\Kr}, a triple 
$(K, \leq,\mathcal{F})$, where $\mathcal{F}(\alpha)=\mathfrak{M}_{\alpha}$ is a classical model 
for the language $\lcal(D(\alpha))$, with $D(\alpha)$ as its universe with 
the property: 
``for each atomic formula $A$,  $\mathfrak{M}_{\alpha}\models A$
iff  $\alpha\Vdash A$".
Let $T$ be a first-order theory in ${\cal L}_{\sf a}$. 
A Kripke model {\Kr} is called $T$-{\em normal} or {\em locally} $T$, if  $\mathcal{F}(\alpha)\models T$,  for all $\alpha\in K$.
In the rest of the paper, we use the notation $\alpha\models A$ instead of 
$\mathcal{F}(\alpha)\models A$.

For a fixed sentence $\rho$ and a sentence $A$ in a language $\lcal$, 
Friedman's translation of $A$ by $\rho$,  
$A^\rho$,  is defined recursively 
 by replacing all occurrences of atomic sub-formulas  of $A$
by their disjunction with $\rho$. More precisely, $A^\rho$ is defined inductively as follows:
\begin{itemize}
	\item $A^\rho:= A\vee \rho$, for atomic formula $A $, 
	\item $ (A_1\circ A_2)^\rho:= A_1^\rho\circ  A_2^\rho  $ and 
	$ \circ\in \{\vee,\wedge,\to\} $,
	\item $ (\forall x A)^\rho:=\forall x (A^\rho) $ and 
	$ (\exists x A)^\rho:=\exists x (A^\rho)$.
\end{itemize}
 We have the following facts about Friedman's 
translation (see \cite{DKMV}):
\begin{proposition}\label{friedman}
\begin{itemize}
\item $\rho\vdash A^\rho$,
\item if $\Gamma\vdash A$ then:  $\Gamma^\rho\vdash A^\rho$,
\item $\vdash^c A^\rho\leftrightarrow(A\vee\rho)$,
\item For any  $A\in\Sigma_1$,  we have $\HA\vdash A^\rho\leftrightarrow(A\vee\rho)$, 
\item $\HA\vdash\HA^\rho$ \textup{(}\HA is closed under Friedman's translation\textup{)}
\end{itemize}

\begin{lemma}\label{Lem-Friedman-neg-rho}
 		$ \neg \rho\vdash A\leftrightarrow A^\rho $.
 	\end{lemma}
 \begin{proof}
 	Use induction on the complexity of $ A $.
 \end{proof}
 
\end{proposition}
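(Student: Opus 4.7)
The plan is to proceed by straightforward induction on the structure of $A$, exploiting the fact that Friedman's translation commutes with every logical connective and quantifier except at atomic formulas, where it inserts a disjunct of $\rho$.

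For the base case, let $A$ be atomic, so $A^\rho = A \vee \rho$. One direction $A \to A^\rho$ is just $\vee$-introduction. For the converse, assume $\neg \rho$ and $A \vee \rho$; by $\vee$-elimination either we already have $A$, or we have $\rho$, which together with $\neg \rho$ gives $\bot$ and hence $A$ by ex falso. This also handles the case $A = \bot$, since then $A^\rho = \bot \vee \rho \leftrightarrow \rho$, and $\neg \rho$ reduces this to $\bot$.

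For the inductive step, each clause in the definition of $A^\rho$ shows that the translation commutes with the outermost connective or quantifier: $(B \circ C)^\rho = B^\rho \circ C^\rho$ for $\circ \in \{\vee,\wedge,\to\}$ and $(Qx\,B)^\rho = Qx\,B^\rho$ for $Q \in \{\forall,\exists\}$. Since $\neg \rho$ is a closed formula, it remains available inside every subproof (in particular under the scope of quantifiers and of the antecedent of an implication), so the induction hypotheses $\neg\rho \vdash B \leftrightarrow B^\rho$ and $\neg\rho \vdash C \leftrightarrow C^\rho$ can be substituted directly to yield $\neg\rho \vdash (B \circ C) \leftrightarrow (B \circ C)^\rho$, and analogously for the quantifier cases.

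There is no real obstacle: the potentially delicate case is $\to$, since Friedman's translation is designed precisely to behave well under implication, but here because we are proving a biconditional from a closed negative hypothesis, the inductive step reduces to congruence of $\leftrightarrow$ with respect to each connective, which is intuitionistically valid. The whole argument is therefore a mechanical induction once the atomic case is dispatched.
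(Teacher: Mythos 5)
Your proof is correct and is exactly the argument the paper intends by its one-line ``induction on the complexity of $A$'': the atomic case is dispatched by $\vee$-elimination plus ex falso under the hypothesis $\neg\rho$, and the inductive step is just congruence of $\leftrightarrow$ with each connective and quantifier, using that $\neg\rho$ is a closed hypothesis available throughout. No differences worth noting.
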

We call a node $\alpha$ in a Kripke model \Kr, {\em classical}, iff  $\alpha\Vdash\forall{\bar{x}}(A\vee\neg A)$, for all $A$ in $\lcal(D(\alpha))$,  where $\bar{x}$ are all free variables of $A$. We have the following facts from \cite{DKMV}:

\begin{proposition}\label{ClassicalNode}
 For any  Kripke models \Kr,
 \begin{itemize}
  \item the following conditions are equivalent:
   \begin{itemize}
    \item $\alpha$ is a classical node,
    \item $\alpha$ forces all $\lcal$-sentences $\forall\bar{x}(A\vee\neg A)$,
    \item For all $A\in \lcal(D(\alpha))$: $\alpha\models A$ iff $\alpha\Vdash A$,
   \end{itemize}
  \item all final nodes are classical,
  \item if  $\alpha$ is classical, then so is $\beta$ for all $\beta\geq\alpha$.
 \end{itemize}
\end{proposition}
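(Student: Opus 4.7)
For the first bullet I will prove the cyclic implications $(ii)\Rightarrow(i)\Rightarrow(iii)\Rightarrow(ii)$. Both $(ii)\Rightarrow(i)$ and $(iii)\Rightarrow(ii)$ are short. Given any $A(\bar{x},\bar{c})\in\mathcal{L}(D(\alpha))$ with parameters $\bar{c}\in D(\alpha)$, the $\mathcal{L}$-sentence $\forall\bar{x}\forall\bar{z}(A(\bar{x},\bar{z})\vee\neg A(\bar{x},\bar{z}))$ is forced at $\alpha$ by $(ii)$, and the $\forall$-clause specialized with $\bar{z}=\bar{c}$ yields $(i)$. For $(iii)\Rightarrow(ii)$ one simply notes that each $\forall\bar{x}(A\vee\neg A)$ is a classical tautology, so $\alpha\models\forall\bar{x}(A\vee\neg A)$, whence $\alpha\Vdash\forall\bar{x}(A\vee\neg A)$ by $(iii)$.

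The main work is $(i)\Rightarrow(iii)$, by induction on the complexity of $A\in\mathcal{L}(D(\alpha))$. Atomic, $\wedge$, $\vee$ and $\exists$ are routine because these connectives have matching clauses in both semantics. For $B\to C$ I use the instance $\alpha\Vdash B\vee\neg B$ of classicality and split cases: if $\alpha\Vdash B$ the equivalence reduces to that for $C$ via the induction hypothesis and monotonicity; if $\alpha\Vdash\neg B$ then $\beta\not\Vdash B$ at every $\beta\geq\alpha$, so $\alpha\Vdash B\to C$ trivially, while the induction hypothesis on $B$ gives $\alpha\not\models B$, so $\alpha\models B\to C$ trivially as well. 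The delicate case is $\forall x B$; the forward direction is immediate from the induction hypothesis. For the reverse, assume $\alpha\models\forall x B$ and invoke classicality applied to $\exists x\neg B$: if $\alpha\Vdash\exists x\neg B$ then some $b\in D(\alpha)$ gives $\alpha\Vdash\neg B[x:b]$, hence $\alpha\not\Vdash B[x:b]$, hence by the induction hypothesis applied to $B[x:b]$ (of strictly smaller complexity than $\forall x B$) $\alpha\not\models B[x:b]$, contradicting $\alpha\models\forall x B$. Thus $\alpha\Vdash\neg\exists x\neg B$, which by monotonicity persists to every $\beta\geq\alpha$. Combined with the instance $\alpha\Vdash\forall x(B\vee\neg B)$ of classicality, which by the $\forall$-clause forces $\beta\Vdash B[x:c]\vee\neg B[x:c]$ at every such $\beta$ and $c\in D(\beta)$, the $\neg B[x:c]$ branch is ruled out, so $\beta\Vdash B[x:c]$ and therefore $\alpha\Vdash\forall x B$.

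For the second bullet, at a final node $\alpha$ every $\beta\geq\alpha$ equals $\alpha$, so a straightforward induction on complexity shows $\alpha\Vdash A\Leftrightarrow\alpha\models A$ for all $A\in\mathcal{L}(D(\alpha))$ with no appeal to any $B\vee\neg B$ hypothesis; since $\forall\bar{x}(A\vee\neg A)$ is classically valid, $\alpha$ forces it. For the third bullet I use characterization $(ii)$: $\alpha$ classical forces every $\mathcal{L}$-sentence of the form $\forall\bar{x}(A\vee\neg A)$, and by monotonicity so does any $\beta\geq\alpha$, which therefore satisfies $(ii)$ and is classical. The main obstacle is precisely the $\forall x B$ case in the induction: one must decide $B[x:c]$ at every $\beta\geq\alpha$ and $c\in D(\beta)$ using only the classicality hypothesis at $\alpha$, and the trick of combining $\alpha\Vdash\forall x(B\vee\neg B)$ with $\alpha\Vdash\neg\exists x\neg B$ accomplishes exactly this without any circular appeal to hereditary classicality.
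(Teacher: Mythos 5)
Your proof is correct, and it is the standard argument for this fact; the paper itself gives no proof, simply quoting the proposition from van~Dalen et al.\ \cite{DKMV}. In particular you correctly isolate and handle the only delicate point, the reverse direction of the $\forall x\,B$ case, by combining $\alpha\Vdash\forall x(B\vee\neg B)$ with $\alpha\Vdash\neg\exists x\,\neg B$ so that elements $c\in D(\beta)\setminus D(\alpha)$ at nodes $\beta\geq\alpha$ are decided without any circular appeal to the heredity of classicality.
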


\section{Localizing finite-depth Kripke models}

Let  \Kr  \ be a Kripke model and $\rho$ be a fixed sentence. We can define a new Kripke model, the {\em pruned} model with respect to $\rho$,   
$\mathcal{K}^\rho=(K^\rho, \leq^\rho, D^\rho, \Vdash^\rho)$, where  
$K^\rho=K\setminus\{\alpha\in{K}\mid \alpha\Vdash\rho\}$  
and $\leq^\rho$, $D^\rho$ and $\Vdash^\rho$ are restriction of  $\leq$, $D$, and $\Vdash$,  respectively 
to the set $K^\rho$. 
\begin{lemma}\label{pruning-lemma}{\bf (Pruning Lemma)}\cite{DKMV}
 Let $\rho\in\lcal$ be a sentence and  {\Kr} be a Kripke model for the language $\lcal$ and $\alpha\in K$ such that $\alpha\nVdash\rho$.  Then for all sentences $A$ in the language $\lcal(D(\alpha))$ :
\begin{center}                            
$\alpha\Vdash^\rho A$ iff $\alpha\Vdash A^\rho$.
\end{center}
\end{lemma}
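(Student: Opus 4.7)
The plan is to prove the Pruning Lemma by induction on the logical complexity of the sentence $A$, with the ambient assumption that we work at a node $\alpha$ with $\alpha\nVdash\rho$ (so $\alpha\in K^\rho$) and that $A\in\lcal(D(\alpha))$. The base case, when $A$ is atomic (including $A=\bot$), is immediate: $A^\rho=A\vee\rho$, forcing on $\mathcal{K}^\rho$ and $\mathcal{K}$ agree at $\alpha$ for atomic $A$ by definition of the restriction, and $\alpha\Vdash A\vee\rho$ collapses to $\alpha\Vdash A$ since $\alpha\nVdash\rho$.

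The conjunction, disjunction, and existential cases are mechanical: they only involve the node $\alpha$ itself (for $\wedge,\vee$) or $\alpha$ together with parameters from $D(\alpha)=D^\rho(\alpha)$ (for $\exists$), so the statement reduces directly to the inductive hypothesis on the immediate subformulas. The real content sits in the two cases whose semantic clause quantifies over successor nodes, namely $A=B\to C$ and $A=\forall x\,B$. Here the difficulty is that $\geq$ ranges over all of $K$ whereas $\geq^\rho$ ranges only over $K^\rho$, and the inductive hypothesis only gives information at nodes outside $K^\rho$ indirectly.

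The key observation that bridges this gap is the first item of \autoref{friedman}, $\rho\vdash B^\rho$: at every node $\beta\geq\alpha$ with $\beta\Vdash\rho$, forcing $B^\rho$ is automatic, and hence forcing $B^\rho\to C^\rho$ (resp.\ $B^\rho[x:b]$) holds trivially at such $\beta$. Therefore the universal statement
\[
\text{for all }\beta\geq\alpha:\ \beta\Vdash B^\rho\ \Rightarrow\ \beta\Vdash C^\rho
\]
is equivalent to its restriction to nodes $\beta\geq\alpha$ with $\beta\nVdash\rho$, i.e.\ to $\beta\geq^\rho\alpha$. On this restricted range, monotonicity of $D$ gives $B,C\in\lcal(D(\beta))$, so the inductive hypothesis applies at every such $\beta$ and replaces $\Vdash B^\rho,\Vdash C^\rho$ by $\Vdash^\rho B,\Vdash^\rho C$, yielding the clause for $\alpha\Vdash^\rho B\to C$. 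The $\forall$-case is handled in exactly the same way, now additionally noting $D^\rho(\beta)=D(\beta)$ for $\beta\in K^\rho$ so that the range of the quantified parameter is unchanged.

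The step I expect to be most delicate is verifying that one really may discard the nodes $\beta\Vdash\rho$ in the implication clause in both directions; this is purely an unwinding of definitions once $\rho\vdash B^\rho$ is in hand, but it is the only place where the two forcing relations genuinely differ, and it is what makes the whole induction go through without any side conditions beyond $\alpha\nVdash\rho$.
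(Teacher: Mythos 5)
Your proof is correct: the induction on the complexity of $A$, with the observation that $\rho\vdash X^\rho$ (first item of \Cref{friedman}) lets you discard the nodes forcing $\rho$ in the $\to$ and $\forall$ clauses, is exactly the standard argument for the Pruning Lemma. The paper itself does not reprove this lemma but simply cites \cite{DKMV}, and what you have written is essentially the proof given there, so there is nothing to reconcile.
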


 The following lemma shows that Friedman's translations are associative:
\begin{lemma}\label{FriedmanAssociative}
For all sentences $\rho_1, \rho_2$ and formula $A$:
\begin{center}
$\vdash A^{(\rho_1^{\rho_2})}\leftrightarrow (A^{\rho_1})^{\rho_2}$.
\end{center}
\end{lemma}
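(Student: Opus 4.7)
The plan is to prove the equivalence by induction on the complexity of $A$. The whole argument reduces to a single observation about atoms; the propositional and quantifier steps merely propagate the equivalence upward using that intuitionistic logic is closed under substitution of provably equivalent subformulas.

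First I would unfold the definitions for an atomic formula $A$. On one side, $A^{(\rho_1^{\rho_2})} = A \vee \rho_1^{\rho_2}$. On the other side, $A^{\rho_1} = A \vee \rho_1$, and then applying the translation by $\rho_2$ distributes over the disjunction (and over each atomic subformula of $\rho_1$), giving $(A^{\rho_1})^{\rho_2} = (A \vee \rho_2) \vee \rho_1^{\rho_2}$. Thus the atomic case reduces to
\[
\vdash (A \vee \rho_2) \vee \rho_1^{\rho_2} \;\leftrightarrow\; A \vee \rho_1^{\rho_2}.
\]
This follows from the first item of \autoref{friedman}, which says $\rho_2 \vdash \rho_1^{\rho_2}$: the right-to-left direction is a weakening, and the left-to-right direction collapses the extra disjunct $\rho_2$ into $\rho_1^{\rho_2}$.

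For the inductive step, if $\circ \in \{\vee, \wedge, \to\}$, then by the recursive clauses of Friedman's translation,
\[
(A_1 \circ A_2)^{(\rho_1^{\rho_2})} = A_1^{(\rho_1^{\rho_2})} \circ A_2^{(\rho_1^{\rho_2})}
\quad\text{and}\quad
((A_1 \circ A_2)^{\rho_1})^{\rho_2} = (A_1^{\rho_1})^{\rho_2} \circ (A_2^{\rho_1})^{\rho_2},
\]
and the induction hypothesis equates the corresponding components. The quantifier cases $\forall x$ and $\exists x$ are handled identically, since both translations commute with quantifiers.

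No real obstacle is expected: the entire content lies in the atomic base case, which is essentially a single appeal to $\rho_2 \vdash \rho_1^{\rho_2}$. The only point of care is to ensure one first establishes the auxiliary identity $(B \vee C)^{\rho_2} = B^{\rho_2} \vee C^{\rho_2}$, used when unfolding $(A \vee \rho_1)^{\rho_2}$ in the atomic case; this is immediate from the definition of the translation.
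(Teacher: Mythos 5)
Your proof is correct and follows essentially the same route as the paper: induction on the complexity of $A$, with the only substantive step being the atomic case, where $(A\vee\rho_2)\vee\rho_1^{\rho_2}\leftrightarrow A\vee\rho_1^{\rho_2}$ is collapsed via $\rho_2\vdash\rho_1^{\rho_2}$ from the first item of \Cref{friedman}. The inductive clauses for connectives and quantifiers are handled identically in both arguments.
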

\begin{proof}
We prove this lemma by induction on the complexity of  $A$:\\
\begin{itemize}
 \item  $A$ is an atomic.  First note that using the first item of \Cref{friedman} implies   
 $\vdash \rho_2\to \rho_1^{\rho_2}$,  and then  
 $\vdash (\rho_2\vee \rho_1^{\rho_2})\leftrightarrow \rho_1^{\rho_2}$. Hence  
  $(A^{\rho_1})^{\rho_2}=A\vee\rho_2\vee\rho_1^{\rho_2}\leftrightarrow A\vee\rho_1^{\rho_2}=A^{(\rho_1^{\rho_2})}$. 
 \item  $A=C\circ B$ and $\circ\in\{\vee,\wedge,\ra\}$. Then  $(C\circ B)^{(\rho_1^{\rho_2})}=C^{(\rho_1^{\rho_2})}\circ B^{(\rho_1^{\rho_2})}\leftrightarrow(C^{\rho_1})^{\rho_2}\circ(B^{\rho_1})^{\rho_2}=(C^{\rho_1}\circ B^{\rho_1})^{\rho_2}=(A^{\rho_1})^{\rho_2}$.
 \item $A=Q{x}B$ and  $Q\in\{\forall,\exists\}$. Then 
  $$A^{(\rho_1^{\rho_2})}=(Q{x}B)^{(\rho_1^{\rho_2})}=Q{x}B^{(\rho_1^{\rho_2})}\leftrightarrow Q{x}(B^{\rho_1})^{\rho_2}=(Q{x}B^{\rho_1})^{\rho_2}=(A^{\rho_1})^{\rho_2} $$
\end{itemize}
\end{proof}

\begin{definition}\label{Def-PEM-star}
Let $\Gamma$ be a set of formulas. We define $\Gamma^*=\bigcup \Gamma^n$, 
in which $\Gamma^n$ is defined inductively as follows
\begin{itemize}
\item $\Gamma^0:=\{\bot\}$,
\item $\Gamma^{n+1}:=\{A^B : A\in\Gamma^n, \ B\in \Gamma\}$.
\end{itemize}
Also we define ${\sf PEM}(\lcal)$  
 as the set of the universal closures of all instances of the principle of  excluded middle $A\vee\neg A$ in the language $\lcal$.  
Moreover, ${\sf PEM}_{\sf sen}(\lcal)$ is defined as the set of   all instances of the principle of  excluded middle $A\vee\neg A$, for sentence $A$  in  $\lcal$.
When no confusion is likely, we might skip $ \lcal $ in the notation 
$ {\sf PEM}(\lcal) $ and other similar notations.
\end{definition}
Note that in the above definition $\Gamma^1$ includes  (an equivalent form of) all formulas   
$A\in\Gamma$,
 $\Gamma^2$ includes (an equivalent form of) all $A^B$, in which $A,B\in \Gamma$, 
 $\Gamma^3$ includes 
 (an equivalent form of) all $(A^{B})^C$, in which $A,B,C\in \Gamma$ and so on.  More importantly 
 $ \Gamma^* $ is closed under 
 	$\Gamma$-Friedman's  translation , i.e. for all $ A\in \Gamma^* $ 
 	and $ B\in \Gamma $, we have $ A^B\in\Gamma^* $.

\begin{definition}\label{Def-K-alpha}
\begin{enumerate}
\item	Let $ \alpha $ be a node of a  Kripke model $ \kcal $ and let 
	$\kcal_\alpha$ denotes the {\em truncated} of $\kcal$ with respect to $\alpha$, i.e., 
	restriction of $\kcal$ to 
	all nodes $\beta\geq \alpha$, with the same forcing relation for atomic formulas as $\kcal$.  
\item 	
	We define the {\em depth} of $\kcal$, indicated by $d(\kcal)$,  as the maximum natural number $ n $, such that no path in $(K, \leq)$ is longer than n. We denote 
 $d_\kcal(\alpha):=d(\kcal_\alpha)  $  or simply $d(\alpha):=d(\kcal_\alpha)  $ if no confusion is likely.
\item 	
	We also define
	 $ \kcal'_\alpha $ as the restriction of the nodes of 
	$ \kcal $ to the following set:
	$$ \{\alpha\}\cup \{\beta : \beta >  \alpha \text{  is not classical}\} \cup  
	\{\beta : \beta>\alpha \wedge \neg\exists\gamma (\gamma>\beta)  \},$$
with the same forcing relation for atomic formulas as $\kcal$.	
In other words, $ \kcal'_\alpha $ is derived from $ \kcal_\alpha $ by eliminating all  
classical  nodes which are strictly above $ \alpha $ and are not leaves.
\end{enumerate}	
\end{definition}
Now we have our main result.
\begin{theorem}\label{main}
Suppose {\Kr} is  a finite-depth Kripke model for the language $\lcal$. 
Then for any   $\alpha\in K$, there exists some $\rho\in{\sf PEM}(\lcal)^*$ 
such that for any sentences $A$ in $\lcal(D(\alpha))$,
\begin{center}
  $\alpha\Vdash A^{\rho}$ iff $\alpha\models A$.
 \end{center} 
\end{theorem}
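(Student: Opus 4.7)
I will proceed by induction on the depth $d_\kcal(\alpha)$, establishing the conclusion uniformly for every finite-depth Kripke model of $\lcal$ and every node in it. For the base case, when $\alpha$ is a leaf, \autoref{ClassicalNode} says $\alpha$ is classical, and I take $\rho := \bot$, which lies in ${\sf PEM}(\lcal)^*$ because $\bot \in {\sf PEM}(\lcal)^0$. Applying \autoref{Lem-Friedman-neg-rho} with $\rho = \bot$ (whose negation is trivially provable) gives $\vdash A \leftrightarrow A^\bot$, so $\alpha \Vdash A^\bot \Leftrightarrow \alpha \Vdash A \Leftrightarrow \alpha \models A$, the last step using classicality.

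For the inductive step, let $d_\kcal(\alpha) = n+1$ with $n \geq 1$. If $\alpha$ happens to be classical the same $\rho=\bot$ works; otherwise there is a formula $C(\bar x)$ with $\alpha \nVdash \forall \bar x(C \vee \neg C)$, and I set $\rho_0 := \forall \bar x(C \vee \neg C) \in {\sf PEM}(\lcal)$. Form the pruned model $\kcal^{\rho_0}$. Since $\alpha \nVdash \rho_0$ we have $\alpha \in K^{\rho_0}$, and (as verified below) $d_{\kcal^{\rho_0}}(\alpha) \leq n$. Because pruning leaves the atomic forcing at $\alpha$ unchanged, the associated classical model $\mathcal{F}(\alpha)$ is the same in both Kripke models. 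The induction hypothesis applied to $\alpha$ in $\kcal^{\rho_0}$ therefore yields $\rho_1 \in {\sf PEM}(\lcal)^*$ with
\[
\alpha \Vdash^{\rho_0} B^{\rho_1} \ \Longleftrightarrow\ \alpha \models B \quad \text{for every sentence } B \in \lcal(D(\alpha)).
\]
Combining this with \autoref{pruning-lemma} and \autoref{FriedmanAssociative} gives $\alpha \Vdash^{\rho_0} B^{\rho_1} \Leftrightarrow \alpha \Vdash (B^{\rho_1})^{\rho_0} \Leftrightarrow \alpha \Vdash B^{\rho_1^{\rho_0}}$, so the choice $\rho := \rho_1^{\rho_0}$ is the one we want. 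By the closure of $(\cdot)^*$ under $\Gamma$-Friedman translation noted after \autoref{Def-PEM-star}, $\rho$ lies in ${\sf PEM}(\lcal)^*$.

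The main obstacle I expect is the depth-reduction claim $d_{\kcal^{\rho_0}}(\alpha) \leq n$. To establish it, I take any chain $\alpha = \beta_0 < \beta_1 < \cdots < \beta_m$ in $\kcal^{\rho_0}_\alpha$. Each $\beta_i \in K^{\rho_0}$, so in particular $\beta_m \nVdash \rho_0$. But every leaf of $\kcal$ is classical by \autoref{ClassicalNode}, and every classical node forces every instance of ${\sf PEM}(\lcal)$; hence $\beta_m$ cannot be a leaf of $\kcal$, so some $\gamma > \beta_m$ exists in $\kcal$. The extended chain $\alpha < \beta_1 < \cdots < \beta_m < \gamma$ in $\kcal_\alpha$ has length $m+2$, forcing $m+2 \leq n+1$, and consequently every chain in $\kcal^{\rho_0}_\alpha$ has length at most $n$, as required for the induction hypothesis to apply.
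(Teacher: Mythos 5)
Your proof is correct and follows essentially the same route as the paper's: induction on $d(\alpha)$, pruning at an unforced instance of ${\sf PEM}$, applying the induction hypothesis to the pruned model, and composing localizers via \Cref{pruning-lemma} and \Cref{FriedmanAssociative}; you merely make explicit the depth-reduction step that the paper attributes to \Cref{ClassicalNode} without detail. The only blemish is the restriction ``$n\geq 1$'' in your inductive step, which should read $n\geq 0$ so that nodes of depth $1$ are covered (your argument goes through unchanged in that case).
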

\begin{proof}
We use induction on ${d}(\alpha)$. 
\begin{itemize}
  \item If $d(\alpha)=0$, then  $\alpha$ is terminal node (a leaf) and hence 
  by \Cref{ClassicalNode}, it is  a classical node.  
  Then we  take $\rho:=\bot\in{\sf PEM}(\lcal)^0$.
  \item  
  Suppose that we have the induction hypothesis for all {\Kr}, $\beta\in K$  with $d(\beta)<n$.
  Let {\Kr} a finite-depth Kripke model, $\alpha\in K$  and  $d(\alpha)=n>0$.  
  If $\alpha$ is a classical node, by  \Cref{ClassicalNode},  we may let $\rho:=\bot$. 
Otherwise,   \Cref{ClassicalNode} implies
  $\alpha\nVdash\forall\bar{x}(A(\bar{x})\vee\neg A(\bar{x}))$,  
  for some formula $A(\bar{x}) \in \lcal $ with free variables in $\bar{x}$. Let 
  $\tau:= \forall\bar{x}(A(\bar{x})\vee\neg A(\bar{x}))\in {\sf PEM}$.  
  Then by Pruning Lemma, for any $B$,  $\alpha\Vdash^\tau B$ iff $\alpha\Vdash B^\tau$.
  By \Cref{ClassicalNode}, we know that $d(K_\alpha^\tau)<n$, and  
  by induction hypothesis,  
  there exists some $\rho'\in{\sf PEM}(\lcal)^*$ 
  such that for all $A$, we have 
 \begin{center} 
 $\alpha\Vdash^\tau A^{\rho'}$ iff  
 $\alpha\models A$ iff  
$\alpha\Vdash (A^{\rho'} )^\tau$. 
\end{center} 
 By associativity of Friedman's translation (\Cref{FriedmanAssociative}), we have
\begin{center}  
 $\alpha\models A$ iff  
$\alpha\Vdash A^{({\rho'}^\tau)}$.
\end{center} 
Now we  define $\rho:=\rho'^\tau$.
Since 
  $\rho'\in {\sf PEM}^*=\bigcup_{i\in \mathbb{N}} {\sf PEM}^i$, there is some $ k\in \mathbb{N} $ 
  such that 
  $ \rho'\in {\sf PEM}^k$. Hence by \Cref{Def-PEM-star} $\rho =\rho'^\tau\in {\sf PEM}^{k+1}\subseteq {\sf PEM}^*$, as desired.\qedhere
\end{itemize}
\end{proof}

The above theorem   could be adopted for the propositional language as well. 
\begin{theorem}\label{Theorem-Main-Prop}
Let $\kcal=(K, \leq, \Vdash)$ be a finite-depth Kripke model for the propositional language. 
For any   $\alpha\in K$, there exists some $\rho\in{\sf PEM}_{\lcal_0}^*$ 
such that for any proposition $A$, 
\begin{center}
$\alpha\Vdash A^{\rho}$ iff $\alpha\models A$.
\end{center}
\end{theorem}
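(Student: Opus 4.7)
The plan is to transcribe the proof of \Cref{main} almost verbatim, exploiting the fact that every ingredient used in that proof—the Pruning Lemma, the associativity of Friedman's translation (\Cref{FriedmanAssociative}), and the characterization of classical nodes in \Cref{ClassicalNode}—holds in the propositional setting as well. The only conceptual simplification is that in $\lcal_0$ there are no free variables, so the PEM-witness chosen at each step of the induction is just a propositional instance $A\vee\neg A$ rather than a universal closure.

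I would run induction on $d(\alpha)$. For the base case $d(\alpha)=0$, the node $\alpha$ is a leaf, hence classical by \Cref{ClassicalNode}, and $\rho:=\bot\in{\sf PEM}_{\lcal_0}^0$ gives $\alpha\Vdash A^\bot\Leftrightarrow\alpha\Vdash A\Leftrightarrow\alpha\models A$. For the inductive step with $d(\alpha)=n>0$, either $\alpha$ is already classical, in which case $\rho:=\bot$ again works, or by \Cref{ClassicalNode} there is some propositional formula $A_0$ with $\alpha\nVdash A_0\vee\neg A_0$. Set $\tau:=A_0\vee\neg A_0\in{\sf PEM}_{\lcal_0}$. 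The Pruning Lemma then yields $\alpha\Vdash^\tau B\Leftrightarrow\alpha\Vdash B^\tau$ for every proposition $B$.

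The crucial quantitative fact is that $d_{\kcal^\tau}(\alpha)<n$. This is because every leaf above $\alpha$ is classical (\Cref{ClassicalNode}) and hence forces the PEM-instance $\tau$, so every maximal chain through $\alpha$ loses at least its top element in the pruning. Applying the induction hypothesis to $\kcal^\tau$ at $\alpha$ furnishes some $\rho'\in{\sf PEM}_{\lcal_0}^*$, say $\rho'\in{\sf PEM}_{\lcal_0}^k$, such that $\alpha\Vdash^\tau B^{\rho'}\Leftrightarrow\alpha\models B$ for every proposition $B$. Chaining the two equivalences and using \Cref{FriedmanAssociative} produces $\alpha\models B\Leftrightarrow\alpha\Vdash(B^{\rho'})^\tau\Leftrightarrow\alpha\Vdash B^{(\rho'^{\tau})}$, so $\rho:=\rho'^{\tau}$ does the job, and it lies in ${\sf PEM}_{\lcal_0}^{k+1}\subseteq{\sf PEM}_{\lcal_0}^*$ by the definition of $\Gamma^*$.

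I expect no genuine obstacle: the argument is a faithful propositional rewriting of the first-order proof. The single point that merits an explicit mention in a formal write-up is the depth-decrease claim $d(\kcal^\tau_\alpha)<n$, but this is immediate from the fact that leaves are classical and classical nodes force every $A\vee\neg A$. Everything else—the associativity step, the closure of ${\sf PEM}_{\lcal_0}^*$ under $\tau$-translation, and the two cases of the inductive step—transfers without modification.
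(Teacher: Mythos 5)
Your proposal is correct and is exactly the argument the paper intends: the paper states \Cref{Theorem-Main-Prop} without a separate proof, noting only that the proof of \Cref{main} ``could be adopted for the propositional language,'' and your write-up is precisely that adaptation (with the right simplification that the {\sf PEM}-witness is a plain instance $A_0\vee\neg A_0$ rather than a universal closure, and with the depth-decrease step $d(\kcal^\tau_\alpha)<n$ justified as in the first-order case).
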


\begin{remark}
{\em A sentence $\rho\in\lcal$ is called a  {\em localizer} for some node $\alpha$ of a Kripke model for the language $\lcal$, if for  any  
sentence $A\in\lcal(D(\alpha))$,  
\begin{center} 
$\alpha\Vdash A^{\rho} \Longleftrightarrow \alpha\models A$.
\end{center} 
}
\end{remark}

In the next proposition, we show that it is {\em  not} possible to find some  localizer $\rho$ to be applied {\em uniformly}  for all Kripke models and nodes with some given height. This means that $\rho$ really depends on the Kripke model and the assigned node.  

\begin{proposition}\label{Proposition-not-uniform}
Given some number $d\geq 1$ and a first-order language $\lcal$, it is not possible to find some  localizer
$\rho$  for all  $\alpha$ in an arbitrary Kripke model with $d(\alpha)=d$. 
\end{proposition}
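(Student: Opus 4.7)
The plan is to refute the existence of a uniform localizer for depth $d$ by constructing, for each candidate sentence $\rho$, a counterexample Kripke model whose shape depends on whether $\rho$ is classically satisfiable. The starting observation is that any localizer $\rho$ at a node $\alpha$ must satisfy $\alpha \nVdash \rho$: applying the defining equivalence with $A = \bot$ and using $\bot^\rho \leftrightarrow \rho$ gives $\alpha \Vdash \rho \iff \alpha \models \bot$, which is possible only if $\alpha \nVdash \rho$.

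If $\rho$ is classically satisfiable, say $\mathfrak{M} \models \rho$ for some classical $\lcal$-model $\mathfrak{M}$, I would stack $d+1$ identical copies of $\mathfrak{M}$ in a chain $\alpha_0 < \alpha_1 < \cdots < \alpha_d$, using the same universe and the same atomic valuation at every node. A routine induction on formula complexity shows that $\beta \Vdash B \iff \mathfrak{M} \models B$ for every node $\beta$ in the chain and every formula $B$, so every node is classical. In particular $d(\alpha_0) = d$ and $\alpha_0 \Vdash \rho$, contradicting the preliminary observation; hence $\rho$ fails as a localizer at $\alpha_0$.

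If instead $\rho$ is classically unsatisfiable, I would fix a unary predicate $P \in \lcal$ (extending $\lcal$ by one if necessary, or exploiting the equality symbol in a growing-domain variant) and build a chain $\alpha_0 < \alpha_1 < \cdots < \alpha_d$ on the constant singleton domain $\{0\}$ in which the atomic sentence $P(0)$ is forced precisely at the leaf $\alpha_d$; monotonicity is immediate. The leaf $\alpha_d$ is classical, and unsatisfiability of $\rho$ forces $\mathfrak{M}_{\alpha_d} \not\models \rho$, whence $\alpha_d \nVdash \rho$. Now test the localizer on $A = \neg P(0)$: classically $\mathfrak{M}_{\alpha_0} \models \neg P(0)$, yet $(\neg P(0))^\rho$ is intuitionistically equivalent to $(P(0) \vee \rho) \to \rho$, which is refuted at $\alpha_0$ because $\alpha_d \Vdash P(0) \vee \rho$ while $\alpha_d \nVdash \rho$.

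The main obstacle I anticipate is the unsatisfiable case: one must exhibit a concrete formula $A$ whose Friedman translation can be refuted at the root, and the choice $A = \neg P(0)$ is tailored precisely to route the refutation through the classical leaf that witnesses the atomic disjunct of $A^\rho$ without witnessing $\rho$. The satisfiable case is essentially routine; the only care needed is verifying that stacking identical classical models with constant domain collapses forcing to classical satisfaction throughout the chain.
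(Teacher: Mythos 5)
Your proof is correct and takes essentially the same route as the paper: the paper first proves $\vdash^c\neg\rho$ via exactly your stacking construction for the satisfiable case, and then derives the contradiction from a depth-$d$ node where forcing and classical truth disagree, which is your unsatisfiable case (you simply refute $(\neg P(0))^\rho$ at the root directly instead of passing through \Cref{Lem-Friedman-neg-rho}). The only loose end is the parenthetical ``extending $\lcal$ by one if necessary'' --- the language is fixed by the statement, so for a language with no non-logical predicate you must actually carry out the growing-domain fallback (e.g.\ $A=\forall x\forall y\,(x=y)$ over domains $\{0\}\subsetneq\{0,1\}$ works), but this is routine and the paper is no more explicit on that point.
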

\begin{proof}
We prove by contradiction. For the sake of contradiction, assume some  uniform  localizer $\rho$, for all nodes with depth $d$. 
\\
{\em Claim:} $\vdash^c\neg\rho$.\\
Before   we continue with the proof of the claim, let us see how this claim finishes the proof. 
From the claim one can deduce that $\rho$ is not forced in the leaves of any Kripke model (since in leaves intuitionistic and classical truth coincide). 
Hence $\neg\rho$ is  forced in any node of any finite-depth Kripke model. \Cref{Lem-Friedman-neg-rho} 
 implies that for any node $\alpha$ of  any finite-depth Kripke model and for all sentence $A$, 
we have $\alpha\Vdash A^\rho\leftrightarrow A$. This implies that $\alpha\Vdash A$ iff $\alpha\Vdash A^\rho$. Since for any $\alpha$ with the depth $d$, we have 
$\alpha\Vdash A^\rho$ iff $\alpha\models A$,  
one may deduce  $\alpha\Vdash A$ iff $\alpha\models A$. Then it is quite straightforward  to find some Kripke model {\Kr}, $\alpha\in K$  with
$d(\alpha)=d\geq 1$ 
and $A\in\lcal(D(\alpha))$, such that it is not the case that $\alpha\Vdash A$ iff $\alpha\models A$. This contradicts  our previous result.
\\
{\em Proof of the claim:}
Assume that $\nvdash^c\neg\rho$. Then there exists some classical structure $\mathfrak{M}\models \rho$. Define a Kripke model $\kcal$ by adding $d-1$ copies 
of $\mathfrak{M}$ in beneath of $\mathfrak{M}$. Let $\alpha_0$ be the root of $\kcal$. Then clearly $d(\alpha_0)=d$ and hence for any $A\in\lcal(D(\alpha_0))$, 
 we have $\alpha_0\Vdash A^\rho$
iff $\alpha_0\models A$. Since $\mathfrak{M}\models \rho$, we have $\alpha_0\Vdash A^\rho$ 
for any  $A\in\lcal(D(\alpha_0))$. Then  for all $A\in\lcal(D(\alpha_0))$, we have 
$\mathfrak{M}\models A$. In particular, $\mathfrak{M}\models \bot$, a contradiction.
\end{proof}

What happens for infinite-depth Kripke models?
In this case, there might not exist any  localizer at all. 
Here we will present a counter-example for the propositional language.
 Since the propositional language is a special case of a first-order language, 
 this counter-example is a counter-example for the first-order language as well.
 Let $\kcal$ be any Kripke model for which 
  the propositional intuitionistic logic is complete (for example the canonical model is such a Kripke model). 
Add some node $\alpha_0$ in beneath of all other nodes of $\kcal$. 
From completeness of $ \kcal $ for the  intuitionistic propositional logic,  we have
 $\alpha_0 \Vdash A$ iff $\vdash A$, for any $A$.
We will show that $\alpha_0$ doesn't have any  localizer. 
Suppose not, i.e.  $\rho$ is  a  localizer for $\alpha_0$. 
Then  $ \alpha_0\Vdash A^\rho $ iff $\alpha_0\models A$, for any $A$. Let $A=\bot$. 
Hence $\alpha_0\nVdash \rho$. By soundness, we have  $\nvdash \rho$.
Since for atomic  $A$, we have $A^\rho=A\vee \rho$, and by disjunction property of 
intuitionistic (propositional) logic, 
and $\nvdash \rho$, 
we may deduce $ \alpha_0\models A $ iff $ \vdash A $ for atomic $ A $. 
Hence $ \alpha_0\models  A\to \bot  $, for all atomic $ A $. Then  
for all atomic $ A $, we have $ \vdash A\to \rho $. This implies that $ \vdash \rho $, a contradiction. 
\begin{remark}
	Localizers for infinite-depth nodes of Kripke models might not exist.
\end{remark}

Although localizers for infinite depth Kripke models may not exist, 
we will show that, by use of methods in\cite{AH2},
for a class of  semi-narrow Kripke models   (definition comes next), 
 which includes finite-depth and also  some infinite models, there exist some sort of localizers (\Cref{theorem3}). 
 \begin{definition}
 	A Kripke model is narrow if there is no infinite set of pairwise incomparable nodes.
 	We say that a Kripke model is semi-narrow, 
 	if  for any set of pairwise incomparable nodes  $ X $
 	  there is some $ n $ such that for almost all $ u\in X $ (all but finitely many of them),
 	   we have $ d(u)\leq n $. 
 	
 \end{definition}
Note that all finite depth Kripke models and also all narrow Kripke models are semi-narrow, but the converse is not necessarily true. For example the comb frame is 
semi-narrow and it is neither  narrow nor  finite-depth. 

\begin{figure}[ht]\centering
	\minipage{0.32\textwidth}
\scalebox{.4}{\hspace{3.5cm} \begin{tikzpicture}[modal,scale=2]
	\node[point]  (a) at (0,0) {.};
	\node[point]  (b) at (.3,1) {.};
	\node[point]  (c) at (-.3,1) {.};
	\node[point]  (d) at (.6,2) {.};
	\node[point]  (e) at (-.6,2) {.};
	\node[point]  (f) at (.9,3) {.};
	\node[point]  (g) at (-.9,3) {.};
	\node[point]  (h) at (1.2,4) {.};
	\node[point]  (i) at (-1.2,4) {.};
	\node[]  (j) at (1.5,5) {};
	\draw[-] (a) -- (b);
	\draw[-] (a) -- (c);
	\draw[-] (b) -- (d);
	\draw[-] (c) -- (e);
	\draw[-] (d) -- (f);
	\draw[-] (e) -- (g);
	\draw[-] (f) -- (h);
	\draw[-] (g) -- (i);
	\draw[-] (h) -- (j);
	\end{tikzpicture}}
\caption*{\footnotesize{``tick” frame}} 
	\endminipage\hfill
	\minipage{0.32\textwidth}
\scalebox{.4}{\hspace{3.5cm}\begin{tikzpicture}[modal,scale=2]
	\node[point]  (a) at (0,0) {.};
	\node[point]  (b) at (.3,1) {.};
	\node[point]  (c) at (-.3,1) {.};
	\node[point]  (d) at (.6,2) {.};
	\node[point]  (e) at (-.6,2) {.};
	\node[point]  (f) at (.9,3) {.};
	\node[point]  (g) at (-.9,3) {.};
	\node[point]  (h) at (1.2,4) {.};
	\node[point]  (i) at (-1.2,4) {.};
	\node[]  (j) at (1.5,5) {};
	\node[]  (k) at (-1.5,5) {};
	\draw[-] (a) -- (b);
	\draw[-] (a) -- (c);
	\draw[-] (b) -- (d);
	\draw[-] (c) -- (e);
	\draw[-] (d) -- (f);
	\draw[-] (e) -- (g);
	\draw[-] (f) -- (h);
	\draw[-] (g) -- (i);
	\draw[-] (h) -- (j);
	\draw[-] (i) -- (k);
	\end{tikzpicture}}
\caption*{\footnotesize{``V” frame}}
	\endminipage\hfill
	\minipage{0.32\textwidth}%
\scalebox{.4}{\hspace{4.5cm}\begin{tikzpicture}[modal,scale=2]
	\node[point]  (a) at (0,0) {.};
	\node[point]  (b) at (.3,1) {.};
	\node[point]  (c) at (-.3,1) {.};
	\node[point]  (d) at (.6,2) {.};
	\node[point]  (e) at (0,2) {.};
	\node[point]  (f) at (.9,3) {.};
	\node[point]  (g) at (.3,3) {.};
	\node[point]  (h) at (1.2,4) {.};
	\node[point]  (i) at (.6,4) {.};
	\node[]  (j) at (1.65,5.5) {};
	\node[point]  (k) at (.9,5) {.};
	\draw[-] (a) -- (b);
	\draw[-] (a) -- (c);
	\draw[-] (b) -- (d);
	\draw[-] (b) -- (e);
	\draw[-] (d) -- (f);
	\draw[-] (d) -- (g);
	\draw[-] (f) -- (h);
	\draw[-] (f) -- (i);
	\draw[-] (h) -- (j);
	\draw[-] (h) -- (k);
	\end{tikzpicture}}
\caption*{\footnotesize{``comb” frame}} 
	\endminipage
\end{figure}

In \cite{AH2}, it is shown that all rooted narrow Kripke models of $ \HA $ are
locally $ \PA $. Here we extend that result to the class of semi-narrow models and also 
show that they have some sort of localizers.

For a Kripke model {\Kr} and $ X\subseteq K $, let  
$ r_1(\kcal,X):=\# \{   \alpha\in X:  d(\alpha) \text{ is infinite }\} $ (the operator $ \#  $ counts the cardinality of its operand) and 
$ r_2(\kcal,X):={\sf max}\{d(\alpha)+1: 
\alpha\in X \text{ and } d(\alpha) \text{ is finite}  \} $ and $ r(\kcal,X):=(r_1(\kcal,X),r_2(\kcal,X)) $.  
Finally define 
$$ r(\kcal):=
 {\sf max}\{r(\kcal,X): X \text{ is a set of pairwise incomparable nodes in } \kcal\}  $$ 
 in which we use $ < $ as lexicographical order on pairs of numbers.  Through these definitions, 
 as is common, we assume that $ {\sf max}\{\}:=0 $. We say $r(\kcal)$ is finite if its both components are finite. 
Note that  $ \kcal $ is semi-narrow iff $ r(\kcal) $ is a finite  
 number.
In the above examples, the rank for ``tick'' , ``V'' and ``comb'' frames are $ (1,4) $, $ (2,0) $ 
and $ (1,1) $, respectively.

\begin{lemma}\label{lem0}
Let {\Kr} be a rooted semi-narrow Kripke model  (with $ \alpha_0$ as its root)
 for the language $ \lcal $. Also let $ \rho =A\vee \neg A $ be a 
 sentence in the language 
 $ \lcal(D(\alpha_0)) $ such that $ \kcal\nVdash \rho $.
Then $ r(\kcal^\rho )<r(\kcal) $.
\end{lemma}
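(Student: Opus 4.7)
The plan is to fix an antichain $X \subseteq K^\rho$ witnessing $r(\mathcal{K}^\rho) = r(\mathcal{K}^\rho, X)$ and then construct an antichain $Y \subseteq K$ with $r(\mathcal{K}, Y) > r(\mathcal{K}^\rho, X)$. Everything hinges on the following depth-lifting observation: for every $\beta \in K^\rho$ with $d_{\mathcal{K}^\rho}(\beta) = k$ finite, one has $d_{\mathcal{K}}(\beta) \geq k+1$. Indeed, pick a maximal chain $\beta = \beta^{(0)} < \cdots < \beta^{(k)}$ inside $\mathcal{K}^\rho$; since $\beta^{(k)} \in K^\rho$, the top does not force $A \vee \neg A$, so in particular $\beta^{(k)} \nVdash \neg A$, whence there is a strict successor $\delta > \beta^{(k)}$ in $\mathcal{K}$ with $\delta \Vdash A$ (so $\delta \Vdash \rho$ and $\delta \notin K^\rho$). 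Adjoining $\delta$ to the chain shows $d_\mathcal{K}(\beta) > k$, and of course $d_{\mathcal{K}^\rho}(\beta) = \infty$ entails $d_\mathcal{K}(\beta) = \infty$ automatically.

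Partition $X = X_\infty \cup X_{<\infty}$ according to infinite vs.\ finite depth in $\mathcal{K}^\rho$. The routine case is $X_{<\infty} \neq \emptyset$: setting $Y = X$, the depth-lifting observation applied to each $\alpha \in X_{<\infty}$ shows that either some such $\alpha$ acquires infinite depth in $\mathcal{K}$ (so $r_1$ jumps) or all remain finite-depth but with $d_\mathcal{K}(\alpha) \geq d_{\mathcal{K}^\rho}(\alpha)+1$ (so $r_2$ gains at least one while $r_1$ stays constant). In either event $r(\mathcal{K}, Y) > r(\mathcal{K}^\rho, X)$ in the lex order. The remaining case $X = X_\infty$, where $r(\mathcal{K}^\rho, X) = (|X|, 0)$, is where the work lies: one must genuinely build a different antichain $Y \subseteq K$. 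Picking $\beta \in X$ (the case $X = \emptyset$ is trivial since $\mathcal{K}$ is rooted and nonempty) and using the infinite chain $\beta = \beta^{(0)} < \beta^{(1)} < \cdots$ inside $\mathcal{K}^\rho$, each $\beta^{(j)}$ has a strict successor $\gamma^{(j)} > \beta^{(j)}$ in $\mathcal{K}$ with $\gamma^{(j)} \Vdash A$, because $\beta^{(j)} \in K^\rho$ fails to force $\neg A$. The plan is then to select $j$ large enough that sliding $\beta$ up to $\beta^{(j)}$ leaves $(X \setminus \{\beta\}) \cup \{\beta^{(j)}\}$ pairwise incomparable, and to adjoin $\gamma^{(j)}$ as a new element incomparable with everything in this modified set. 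If the resulting $\gamma^{(j)}$ is finite-depth the rank becomes at least $(|X|, 1)$, and if it is infinite-depth it becomes $(|X|+1, 0)$; either way the strict inequality $> (|X|, 0)$ is achieved.

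The principal obstacle is carrying out this last combinatorial choice. Sliding $\beta$ up to $\beta^{(j)}$ may fail to keep the antichain pairwise incomparable, since a general poset can admit $\beta^{(j)} \geq \beta_i$ for some other $\beta_i \in X$ even though $\beta \parallel \beta_i$; an analogous obstruction threatens $\gamma^{(j)}$. Semi-narrowness is the essential tool here: because $\mathcal{K}$ admits only finitely many pairwise incomparable infinite-depth nodes, a pigeonhole argument on $j$ together with the minimality of $\gamma^{(j)}$ and the fact that every $\beta^{(j)}$ stays inside $K^\rho$ should allow one to choose $j$ and $\gamma^{(j)}$ avoiding all the finitely many forbidden upward-cones $\{\alpha : \alpha \geq \beta_i\}$. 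Making this selection precise, and verifying that the resulting $Y$ really is an antichain of the advertised rank, is the heart of the proof.
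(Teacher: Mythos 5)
Your proposal is not complete: it breaks off at exactly the point you yourself call ``the heart of the proof,'' and that point is the only genuinely hard part of the lemma. To your credit, you have located the real difficulty. The paper's own proof is a three-fact sketch, and its third fact --- that $r(\kcal^\rho, X\cap K^\rho) < r(\kcal,X)$ for \emph{every} antichain $X$ --- is false precisely in your case $X=X_\infty$: if every member of $X$ keeps infinite depth after pruning, that inequality degenerates to an equality (this already happens on the comb frame with $X$ the singleton of the root). Your depth-lifting observation is the paper's second fact, correctly proved, and your ``routine case'' is a correct repair of the third fact whenever the optimal antichain contains a finite-depth node (granting that the maximum defining $r(\kcal^\rho)$ is attained, which you assume silently). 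But the all-infinite-depth case is left as a plan, and the plan has two holes. First, ``the infinite chain $\beta=\beta^{(0)}<\beta^{(1)}<\cdots$ inside $\kcal^\rho$'' does not exist merely because $d_{\kcal^\rho}(\beta)=\infty$; infinite depth only yields arbitrarily long finite chains. (An infinite ascending chain of infinite-depth nodes \emph{can} be extracted, but only via semi-narrowness: successors of $\beta$ of unbounded finite depth would contain either an infinite chain, contradicting finiteness of their depths, or an infinite antichain of unbounded depth, contradicting semi-narrowness.) Second, the ``slide and adjoin'' selection is never carried out, and it is not mere bookkeeping: the witness $\gamma^{(j)}\Vdash A$ may lie above \emph{every} $\beta^{(i)}$ simultaneously, and nothing in the stated hypotheses forbids this.

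Indeed the second hole cannot be closed in the stated generality, because the lemma itself fails there. Let $K$ be the linear order $r<\cdots<w_3<w_2<w_1$ (order type $1+\omega^{*}$), with an atomic sentence $P$ forced only at the top node $w_1$, and $\rho=P\vee\neg P$. This model is rooted and semi-narrow (all antichains are singletons), $r\nVdash\rho$, and $K^\rho=\{r,w_2,w_3,\dots\}$ still gives $r$ infinite depth, so $r(\kcal^\rho)=(1,0)=r(\kcal)$. So to finish you must import an extra hypothesis --- e.g.\ a tree frame in which every node has only finitely many strict predecessors, which is what the intended application in \Cref{lem1} can be arranged to supply. Under that hypothesis your plan does go through: the set $\{i:\gamma^{(j)}>\beta^{(i)}\}$ is then a finite initial segment, so some $\beta^{(m)}$ is incomparable with $\gamma^{(j)}$, and disjointness of the upward cones over an antichain in a tree makes $\{\gamma^{(j)},\beta^{(m)}\}\cup(X\setminus\{\beta\})$ an antichain of rank strictly above $(|X|,0)$. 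Either add those hypotheses and write out this selection, or record the counterexample; as it stands the argument is a correct diagnosis with the cure still missing.
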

\begin{proof}
One may easily prove the lemma by observing the following facts:
	\begin{enumerate}
			\item     $ X\subseteq K^\rho $  is pairwise incomparable 
		in $ \kcal ^\rho$ iff	  it is pairwise incomparable in $ \kcal $,
		\item $ d_{\kcal^\rho}(\alpha)<d_\kcal (\alpha)$, for any $ \alpha\in K^\rho $,
		\item  for any  set $ X\subseteq K $ of pairwise incomparable 
		nodes, we have $$ r(\kcal^\rho, X\cap K^\rho )< r(\kcal,X) $$
	\end{enumerate}
\end{proof}
We say that $ \alpha\in K $ is {\em weakly classical} in {\Kr} if 
$\alpha\Vdash {\sf PEM}_{\sf sen}(\lcal(D(\alpha)))$.
\begin{lemma}\label{lem1}
	Let {\Kr} be a semi-narrow Kripke model   with tree frame for the 
	language $ \lcal $. Then for any  $ \alpha\in K $, there exists some 
	$ \rho\in {\sf PEM}_{\sf sen}(\lcal(D(\alpha)))^* $ such that 
	$ \alpha $ is weakly classical in $\kcal^\rho$ . 
\end{lemma}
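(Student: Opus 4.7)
The plan is to mirror the structure of \Cref{main}, replacing induction on depth by induction on the rank $r(\kcal_\alpha)$ under the lexicographic well-order on pairs of natural numbers (which is finite since $\kcal$ is semi-narrow), using \Cref{lem0} as the rank-reduction tool in place of ``depth decreases after pruning''.

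If $\alpha$ is already weakly classical in $\kcal$ (in particular if $\alpha$ is a leaf, where it is even classical by \Cref{ClassicalNode}), I set $\rho := \bot \in {\sf PEM}_{\sf sen}^0$ and am done. Otherwise, I pick a witness $A \in \lcal(D(\alpha))$ with $\alpha \nVdash A \vee \neg A$ and let $\tau := A \vee \neg A \in {\sf PEM}_{\sf sen}(\lcal(D(\alpha)))$. Since $\alpha \nVdash \tau$, the submodel $\kcal^\tau_\alpha$ is again a rooted semi-narrow Kripke model with tree frame: pruning commutes with truncation, and by monotonicity the set of nodes forcing $\tau$ is upward closed, so pruning removes entire subtrees and preserves the tree structure. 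By \Cref{lem0}, $r(\kcal^\tau_\alpha) < r(\kcal_\alpha)$, so the induction hypothesis applied at $\alpha$ yields some $\rho' \in {\sf PEM}_{\sf sen}(\lcal(D(\alpha)))^*$ such that $\alpha$ is weakly classical in $(\kcal^\tau)^{\rho'}$.

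I would then define $\rho := \rho'^\tau$. Membership $\rho \in {\sf PEM}_{\sf sen}^*$ is immediate from \Cref{Def-PEM-star}: if $\rho' \in {\sf PEM}_{\sf sen}^k$ then $\rho \in {\sf PEM}_{\sf sen}^{k+1}$. Also $\alpha \nVdash \rho$, since by the Pruning Lemma $\alpha \Vdash \rho'^\tau$ iff $\alpha \Vdash^\tau \rho'$, which fails because $\alpha \in (K^\tau)^{\rho'}$ by the induction hypothesis. To verify that $\alpha$ is weakly classical in $\kcal^\rho$, I combine two applications of the Pruning Lemma with associativity of Friedman's translation (\Cref{FriedmanAssociative}) to obtain, for any sentence $B \in \lcal(D(\alpha))$,
$$\alpha \Vdash^\rho B \ \iff\ \alpha \Vdash B^{\rho'^\tau} \ \iff\ \alpha \Vdash (B^{\rho'})^\tau \ \iff\ \alpha \Vdash^\tau B^{\rho'} \ \iff\ \alpha \Vdash^{\rho'}_{\kcal^\tau} B,$$
and then specialise $B$ to each instance $C \vee \neg C$ with $C \in \lcal(D(\alpha))$.

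The main obstacle is the bookkeeping around the Pruning Lemma: each invocation requires the appropriate non-forcing hypothesis at the relevant stage (here $\alpha \nVdash \tau$ for the outer prune and $\alpha \nVdash^\tau \rho'$ for the inner prune), and one must confirm that truncation and pruning commute so that the rank bound from \Cref{lem0}, stated for rooted models, transfers to the rooted submodel at $\alpha$. Once these hygiene checks are in place, the argument is essentially a rank-induction replay of the proof of \Cref{main}.
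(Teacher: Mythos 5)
Your proof is correct and follows essentially the same route as the paper: induction on the rank $r(\kcal)$ with \Cref{lem0} as the rank-reduction step, pruning by a refuted instance $\tau=A\vee\neg A$, and composing with the localizer obtained from the induction hypothesis via $\rho:=\rho'^{\tau}$. The only cosmetic difference is that the paper compresses your final chain of Pruning-Lemma applications into the single identity $(\kcal^{\tau})^{\rho'}=\kcal^{(\rho'^{\tau})}$, which is justified by exactly the bookkeeping you spell out.
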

\begin{proof}
	 Without loss of generality, we may assume that 
	 $ \alpha  $ is the root of $ \kcal $. We use induction on $ r(\kcal) $ and prove the lemma. As induction hypothesis, assume that for any Kripke model 
	 $ \kcal_1:=(K_1,\leq_1 , D_1, \Vdash_1) $ with $ r(K_1)<n $, the lemma holds and let $ \kcal $ be a rooted Kripke model with 
	 $ r(\kcal)=n $, and $ \alpha $ as its root. If for any 
	 sentence $ A $ in the language $ \lcal(D(\alpha)) $, it holds that  
	  $ \kcal,\alpha\Vdash  A\vee \neg A$, then $ \alpha  $ is weakly classical and 
	  $ \rho:=\bot $ works. So assume that  $ \kcal,\alpha\nVdash  A\vee \neg A$, for
	  some sentence $ A\in \lcal(D(\alpha)) $.   Let $ \delta:=A\vee \neg A $.
	   By \Cref{lem0}, we have $ r(\kcal^\delta)<r(\kcal) $ and induction
	   hypothesis applies to 
	  $ \kcal^\delta $. Hence there exists some 
	  $ \theta \in {\sf PEM}_{\sf sen}(\lcal(D(\alpha)))^*$ 
	  such that $ (\kcal^\delta)^\theta $ is weakly classical at 
	  $ \alpha $. Since $   (\kcal^\delta)^\theta =  \kcal^{(\theta^\delta)}$,
	   and $ \rho := \theta^\delta\in  {\sf PEM}_{\sf sen}(\lcal(D(\alpha)))^*$,
	   we have the desired result.
\end{proof}
Let us define the translation $ ( A)^\forall  $ from \cite{AH2}.
For a formula $ A $ in an arbitrary language, let $ A^\forall  $ be the  formula obtained from $ A $ by replacing any 
$ \forall x B $ subformula of $ A $ by $ \forall x\neg \neg B $
 (This is a variant of the Kuroda translation  \cite[3.3.7]{TD}). 

The following lemma is from \cite{AH2}.
\begin{lemma}\label{lem2}
	 Let {\Kr} be a Kripke model and $ \alpha $ be a weakly classical node. 
	 Then for any sentence $ A $ in  $ \lcal(D(\alpha)) $, 
	 $ \alpha\Vdash A^\forall  $ iff $ \alpha\models A $.
\end{lemma}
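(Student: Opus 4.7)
My plan is to prove Lemma~\ref{lem2} by induction on the complexity of the formula, strengthening the statement to allow free variables: for any formula $B$ with free variables among $\bar{y}$ and any $\bar{b}\in D(\alpha)^{|\bar{y}|}$, we have $\alpha\Vdash B^{\forall}[\bar{y}\!:\!\bar{b}]$ iff $\alpha\models B[\bar{y}\!:\!\bar{b}]$. The atomic base case is immediate, since $B^{\forall}=B$ and atomic forcing at $\alpha$ coincides with $\mathfrak{M}_\alpha\models B$ by the Kripke-model setup. The cases for $\wedge,\vee,\exists$ are routine from the IH, because $(\cdot)^{\forall}$ commutes with these connectives and an existential witness can always be chosen from $D(\alpha)$. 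For implication $A=B\to C$, I would apply weak classicality to the sentence $B^{\forall}\in\lcal(D(\alpha))$: if $\alpha\Vdash B^{\forall}$, IH gives $\alpha\models B$, and $\alpha\models B\to C$ yields $\alpha\models C$, whence IH gives $\alpha\Vdash C^{\forall}$; if $\alpha\Vdash\neg B^{\forall}$, no successor forces $B^{\forall}$ and the implication $B^{\forall}\to C^{\forall}$ holds vacuously. The converse uses only IH applied at $\alpha$.

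The universal case $A=\forall x B$, with $A^{\forall}=\forall x\,\neg\neg B^{\forall}$, is the main obstacle. The $(\Leftarrow)$ direction is the easy half: instantiating at any $a\in D(\alpha)$, weak classicality on the sentence $B^{\forall}[x\!:\!a]$ gives $\neg\neg$-stability, converting $\alpha\Vdash\neg\neg B^{\forall}[x\!:\!a]$ into $\alpha\Vdash B^{\forall}[x\!:\!a]$, after which IH produces $\alpha\models B[x\!:\!a]$ and thus $\alpha\models\forall x B$.

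The genuine difficulty is the $(\Rightarrow)$ direction: $\forall x\,\neg\neg B^{\forall}$ must be forced at every $\beta\geq\alpha$ and every $b\in D(\beta)$, including new elements outside $D(\alpha)$, whereas $\alpha\models\forall x B$ only controls truth over $D(\alpha)$. The key move is to apply weak classicality not to $\forall x\,\neg\neg B^{\forall}$ but to the sentence $\exists x\,\neg B^{\forall}\in\lcal(D(\alpha))$. Suppose for contradiction that some $\gamma\geq\alpha$ and $b\in D(\gamma)$ satisfy $\gamma\Vdash\neg B^{\forall}[x\!:\!b]$; then $\gamma\Vdash\exists x\,\neg B^{\forall}$, so by monotonicity $\alpha\nVdash\neg\exists x\,\neg B^{\forall}$, and weak classicality then forces $\alpha\Vdash\exists x\,\neg B^{\forall}$. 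This produces a witness $a\in D(\alpha)$ with $\alpha\Vdash\neg B^{\forall}[x\!:\!a]$. But $\alpha\models\forall x B$ gives $\alpha\models B[x\!:\!a]$, and IH then yields $\alpha\Vdash B^{\forall}[x\!:\!a]$, the desired contradiction. This funnelling of a potentially ``new'' violation back to an honest witness in $D(\alpha)$ is precisely what the Kuroda-style double negation is designed to enable, and it is the delicate step of the whole argument.
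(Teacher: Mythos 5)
Your proof is correct and is precisely the induction on the complexity of $A$ that the paper (following \cite{AH2}) invokes in a single line; in particular you handle the one genuinely delicate step correctly, namely applying weak classicality to the sentence $\exists x\,\neg B^\forall\in\lcal(D(\alpha))$ so as to pull a failure of $\neg\neg B^\forall$ at an arbitrary successor (possibly witnessed by a new domain element) back to an honest witness in $D(\alpha)$, where the induction hypothesis and $\alpha\models\forall x B$ give the contradiction. The only cosmetic issue is that your $(\Rightarrow)$/$(\Leftarrow)$ labels in the $\forall$-case are swapped relative to the orientation of the ``iff'' in the statement.
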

\begin{proof}
	Use induction on the complexity of $ A $.
\end{proof}
\begin{theorem}\label{theorem3}
	For a  semi-narrow Kripke model  {\Kr} with tree frame for a 
	language $ \lcal $ and any  $ \alpha\in K $, there exists some 
	$ \rho\in {\sf PEM}_{\sf sen}(\lcal(D(\alpha)))^* $ such that  for all sentences 
	$ A\in \lcal(D(\alpha)) $, 
	\begin{center}
		 $ \alpha\Vdash (A^\forall)^\rho $ \quad iff \quad $ \alpha\models A $
	\end{center}
\end{theorem}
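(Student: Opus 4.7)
The statement is a clean composition of the three preceding lemmas, and the plan is simply to chain them together at the node $\alpha$. First, I would invoke \Cref{lem1} on $\kcal$ at $\alpha$ to obtain some $\rho\in{\sf PEM}_{\sf sen}(\lcal(D(\alpha)))^*$ such that $\alpha$ is weakly classical in the pruned model $\kcal^\rho$. Note that the very statement ``$\alpha$ is weakly classical in $\kcal^\rho$'' already guarantees $\alpha\in K^\rho$, hence $\alpha\nVdash\rho$, which is precisely what we need in order to apply the Pruning Lemma at $\alpha$.

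Next, since $\alpha$ is weakly classical in $\kcal^\rho$, \Cref{lem2} applied inside $\kcal^\rho$ yields, for every sentence $A\in\lcal(D(\alpha))$,
\[
\alpha\Vdash^\rho A^\forall \iff \kcal^\rho,\alpha\models A.
\]
Since pruning only removes nodes and leaves the atomic forcing and the domain at $\alpha$ untouched, the classical structure $\mathcal{F}(\alpha)$ attached to $\alpha$ is the same in $\kcal$ and in $\kcal^\rho$; therefore $\kcal^\rho,\alpha\models A$ iff $\alpha\models A$.

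Finally, because $\alpha\nVdash\rho$, the Pruning Lemma (\Cref{pruning-lemma}) applied to the sentence $A^\forall$ gives
\[
\alpha\Vdash^\rho A^\forall \iff \alpha\Vdash (A^\forall)^\rho.
\]
Combining the two biconditionals yields $\alpha\Vdash(A^\forall)^\rho$ iff $\alpha\models A$, which is the desired conclusion with the same $\rho\in{\sf PEM}_{\sf sen}(\lcal(D(\alpha)))^*$ supplied by \Cref{lem1}.

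The only ``obstacle'' worth flagging is the bookkeeping point just used: one must observe that Lemma~\ref{lem1} not only produces a $\rho$ but also implicitly ensures $\alpha\nVdash\rho$ (otherwise $\alpha$ would not belong to $\kcal^\rho$ and the phrase ``weakly classical in $\kcal^\rho$'' would be vacuous). Given this, the rest is a mechanical three-step chain Lemma~\ref{lem1} $\to$ Lemma~\ref{lem2} $\to$ Pruning Lemma, with no inductive argument of its own.
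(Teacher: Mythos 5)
Your proof is correct and is exactly the paper's argument: the paper's own proof is the one-line ``Use Lemmas \ref{lem1}, \ref{lem2} and the Pruning Lemma,'' and your write-up simply makes explicit the chaining (including the bookkeeping point that weak classicality of $\alpha$ in $\kcal^\rho$ forces $\alpha\nVdash\rho$, and that pruning does not change $\mathcal{F}(\alpha)$).
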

\begin{proof}
Use \Cref{lem1,lem2,pruning-lemma}.
\end{proof}

\section{Refinements}

In this section we strengthen  \Cref{main}.  We will examine the  question whether is it possible to minimize the set $ {\sf PEM} $  in \Cref{main}? In  \Cref{main-refinement}, we will show that $ {\sf PEM}_1 $ (see \Cref{Def-hieght-PEM}) is enough, however 
we do not know if $ {\sf PEM}_1 $ is the minimal set. 

Hosoi in \cite{hosoi1967intermediate} introduces slices $ \mathcal{S}_n $ for 
the intermediate logics and Ono in \cite{ono1971kripke} shows that there is 
 a tight relationship between slices and  depth-$n$ Kripke models in the following sense. 
 ``The logic of a Kripke frame is in the slice $ \mathcal{S}_n $ iff the height of the Kripke frame is $ n $”.
 
 In this paper, we use the height (depth) of Kripke models to slice the formulas in the language.

\begin{definition}\label{Def-hieght-PEM}
	Let $ \lcal  $ be an arbitrary first-order language or propositional language. The Kripke-rank of a formula 
	$ A \in\lcal $, $ h_{_\lcal}(A) $,  is the minimum  number $ n $, such that there exists some depth-$ n $ Kripke model refusting $A$, 
	$ \kcal\nVdash A $. If there is some infinite-depth Kripke model which
	 refutes $ A $ and  no finite-depth Kripke model refuting $ A $, then we define
	  $ h(A):=\omega $.
       	If  there is no   Kripke model  $ \kcal \nVdash A  $, we define 
	$ h(A):=\infty $. 
	For a set of formulas $ \Gamma\subseteq \lcal $, let  
	$ \Gamma_n:=\{A\in \Gamma : h_{_\lcal}(A)= n\}$. 
\end{definition}‌

It is clear that in any language ($ \sqcup $ means disjoint union)
$$
 {\sf PEM}_{}={\sf PEM}_{\infty}\sqcup{\sf PEM}_{\omega}\sqcup \bigsqcup_{k\in\mathbb{N}} {\sf PEM}_{k} 
$$
Since intuitionistic propositional logic has finite model property, 
there is no $ A\in \lcal_0$ with $ h(A)=\omega $. Hence 
$ {\sf PEM}_\omega=\emptyset $ in propositional language.
Before we continue, let's observe that $ {\sf PEM}_n(\lcal_0) $ is nonempty, for any $ n\in \omega $. 
Define $ A_n \in\lcal_0$  by the following clauses:
\begin{itemize}
	\item $ A_0:=\bot $,
	\item $ A_{n+1}:=p_n\vee (p_n\to A_n) $.
\end{itemize}
\begin{proposition}
For all $ n\in\mathbb{N} $, we have 	$ A_{n}\vee\neg A_n\in {\sf PEM}_{n}(\lcal_0)$.
\end{proposition}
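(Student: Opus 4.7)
The plan is to prove $h(A_n \vee \neg A_n) = n$ by separately establishing the matching upper and lower bounds on the Kripke-rank, so that $A_n \vee \neg A_n$ sits in exactly the slice $\mathsf{PEM}_n(\lcal_0)$.

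For the upper bound $h(A_n \vee \neg A_n) \leq n$, I would exhibit an explicit chain of depth $n$ that refutes $A_n \vee \neg A_n$. Take $n+1$ nodes $\alpha_0 < \alpha_1 < \cdots < \alpha_n$ arranged linearly and set $\alpha_i \Vdash p_j$ iff $j \geq n - i$; monotonicity is immediate. Reading the recursive definition of $A_n$ from the outside in, a short downward induction on $i$ shows that $\alpha_i \nVdash A_{n-i}$ for every $0 \leq i \leq n-1$: at $\alpha_i$ the atom $p_{n-i-1}$ is not yet forced, and $\alpha_{i+1}$ witnesses the failure of $p_{n-i-1} \to A_{n-i-1}$ because it already forces $p_{n-i-1}$ and, by the previous step of the induction, refutes $A_{n-i-1}$. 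In particular $\alpha_0 \nVdash A_n$. On the other hand, $\alpha_n \Vdash p_{n-1}$ gives $\alpha_n \Vdash A_n$, so $\alpha_0 \nVdash \neg A_n$ as well. Hence $\alpha_0$ refutes $A_n \vee \neg A_n$ in a depth-$n$ Kripke model.

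For the lower bound $h(A_n \vee \neg A_n) \geq n$, I would prove by induction on $n$ the stronger auxiliary claim: \emph{in any Kripke model and at any node $\alpha$, if $\alpha \nVdash A_n$, then $d(\alpha) \geq n$.} The base case $n=0$ is vacuous since $d(\alpha) \geq 0$ always. For $n \geq 1$, the assumption $\alpha \nVdash A_n$ unpacks to $\alpha \nVdash p_{n-1}$ together with $\alpha \nVdash p_{n-1} \to A_{n-1}$, the latter supplying some $\beta \geq \alpha$ with $\beta \Vdash p_{n-1}$ and $\beta \nVdash A_{n-1}$. Monotonicity combined with $\alpha \nVdash p_{n-1}$ forces $\beta > \alpha$ strictly, so the induction hypothesis gives $d(\beta) \geq n-1$ and hence $d(\alpha) \geq n$. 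Applying the claim to any node refuting $A_n \vee \neg A_n$ (which \emph{a fortiori} refutes $A_n$) yields $h(A_n \vee \neg A_n) \geq n$.

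The proof is not technically deep; the one step worth highlighting is the observation that the recursive shape $A_n = p_{n-1} \vee (p_{n-1} \to A_{n-1})$ is engineered so that any refutation of the outer implication both requires a strict vertical extension of the node (because the fresh atom $p_{n-1}$ must be newly forced above $\alpha$) and transports a refutation of the smaller $A_{n-1}$ exactly one level up. This single mechanism is what drives both the construction of the refuting chain and the inductive lower bound, and combining the two bounds gives $A_n \vee \neg A_n \in \mathsf{PEM}_n(\lcal_0)$.
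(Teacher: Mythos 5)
Your proof is correct and follows essentially the same strategy as the paper's, which only sketches ``show $h(A_n)=n$ by induction on $n$ and note that the same model refuting $A_n$ refutes $A_n\vee\neg A_n$''; you supply exactly the details left implicit there, namely the explicit depth-$n$ chain for the upper bound (together with the check that its top node forces $A_n$, so the root also refutes $\neg A_n$) and the inductive claim that $\alpha\nVdash A_n$ forces $d(\alpha)\ge n$ for the lower bound. The only caveat, which applies equally to the paper's own argument, is the degenerate case $n=0$: there $A_0\vee\neg A_0=\bot\vee\neg\bot$ is intuitionistically valid, so its Kripke-rank is $\infty$ rather than $0$ (and $\mathsf{PEM}_0(\lcal_0)=\emptyset$ in any case), so the statement should be read for $n\ge 1$.
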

\begin{proof}
 First we show that $ h(A_n)=n $, by induction on $n$. We note that the same Kripke model which refutes 
 $ A_n $ also refutes $ A_n\vee \neg A_n $. This implies the desired result.
\end{proof}
In the following lemma, we use the notation $ \kcal'_\alpha $ from \Cref{Def-K-alpha}, and ${\kcal'_\alpha},\alpha\Vdash A$ means that the node $\alpha$ in model ${\kcal'_\alpha}$ forces $A$.
\begin{lemma}\label{Lem-Kripke-classical}
	Let $ \alpha $  be a node of the finite-depth Kripke model  $ \kcal $. 
	Then for all $ A\in\lcal(D(\alpha)) $, 
\begin{center}	
$ {\kcal'_\alpha},\alpha\Vdash A$ iff $\kcal,\alpha\Vdash A$. 
\end{center}	
\end{lemma}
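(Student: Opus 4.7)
I will prove, by induction on the complexity of $A$, the stronger statement that for \emph{every} node $\beta$ of $\kcal'_\alpha$ and every $A \in \lcal(D(\beta))$, we have $\kcal,\beta \Vdash A \iff \kcal'_\alpha,\beta \Vdash A$. Specializing to $\beta=\alpha$ will give the lemma. Before the induction, I will record the following auxiliary fact about classical nodes: if $\gamma$ is classical in $\kcal$ and $\delta \geq \gamma$, then for any $A \in \lcal(D(\gamma))$,
\[
\mathfrak{M}_\gamma \models A \iff \mathfrak{M}_\delta \models A.
\]
The forward direction uses that $\gamma$ classical gives $\mathfrak{M}_\gamma\models A$ iff $\kcal,\gamma\Vdash A$, combined with monotonicity to get $\kcal,\delta\Vdash A$, i.e. $\mathfrak{M}_\delta\models A$ (noting that $\delta$ is classical as well, by \Cref{ClassicalNode}). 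The converse uses the same characterization applied to $\neg A$: if $\mathfrak{M}_\gamma\not\models A$, then $\kcal,\gamma\Vdash \neg A$, hence $\kcal,\delta\Vdash \neg A$, so $\mathfrak{M}_\delta\not\models A$.

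The atomic case of the induction is immediate since the forcing of $\kcal'_\alpha$ at atomic formulas is just the restriction of that of $\kcal$. The cases $\wedge$, $\vee$, and $\exists x$ are routine, because $\exists$ is interpreted locally at the current node with the current domain, so the induction hypothesis applies verbatim. The two interesting cases are $\to$ and $\forall x$. In both, the forward direction ($\kcal\Vdash\Rightarrow\kcal'_\alpha\Vdash$) is direct since $\kcal'_\alpha$ has no new successors nor new domain elements. For the backward direction of $A\to B$, I take $\gamma \geq \beta$ in $\kcal$ with $\kcal,\gamma\Vdash A$ and aim for $\kcal,\gamma\Vdash B$. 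If $\gamma \in \kcal'_\alpha$, the induction hypothesis at $\gamma$ (applied to $A$ and then to $B$) closes the case. Otherwise $\gamma$ is a classical non-leaf strictly above $\alpha$; since $\kcal$ is finite-depth, I pick a leaf $\delta > \gamma$, which is classical and belongs to $\kcal'_\alpha$. Monotonicity gives $\kcal,\delta\Vdash A$; the induction hypothesis at $\delta$ then transports this into $\kcal'_\alpha$ and, via the assumed $\kcal'_\alpha$-forcing of $A\to B$ at $\beta$, back out as $\kcal,\delta\Vdash B$, i.e. $\mathfrak{M}_\delta\models B$. Since $A,B\in\lcal(D(\alpha))\subseteq\lcal(D(\gamma))$, the auxiliary fact yields $\mathfrak{M}_\gamma\models B$, which (using $\gamma$ classical) is exactly $\kcal,\gamma\Vdash B$. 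The $\forall x$ case is analogous: when the troublesome $\gamma$ is a classical non-leaf, I pick a leaf $\delta>\gamma$ and observe that any witness $d\in D(\gamma)$ also lies in $D(\delta)$, so the same transfer works after instantiating $x$ by $d$.

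The main obstacle is the backward direction at $\to$ and $\forall$: the removed classical non-leaf $\gamma$ is precisely a node at which we can no longer use the induction hypothesis directly, and we must instead route the argument through a leaf above $\gamma$. The two ingredients that make this routing work are (i) finite-depth, which guarantees such a leaf exists, and (ii) the auxiliary elementary-equivalence fact for classical nodes, which lets classical truth at the leaf transfer back down to the removed node. A minor bookkeeping point is to keep track of the fact that the formulas and witnesses involved always belong to the language/domain at $\gamma$, so that the auxiliary fact genuinely applies.
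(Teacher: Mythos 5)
Your proof is correct, and it rests on exactly the same key idea as the paper's: the nodes deleted in passing from $\kcal_\alpha$ to $\kcal'_\alpha$ are classical non-leaves, and forcing at such a node can be recovered from forcing at a leaf above it because classical truth transfers upward (and, via forced negations, downward) between classical nodes. The only difference is organizational: the paper runs an outer induction on $d(\alpha)$ (handling non-classical $\beta>\alpha$ by the identity $\kcal'_\beta=(\kcal'_\alpha)_\beta$ and the induction hypothesis) with a nested, only sketched, induction on $A$, whereas you run a single induction on the complexity of $A$ over the statement strengthened to all nodes of $\kcal'_\alpha$. Your version arguably makes the "routine" inner induction of the paper fully explicit and dispenses with the induction on depth altogether (finite depth is still needed, but only to guarantee a leaf above each removed node); the bookkeeping you flag --- that the formulas and witnesses at the removed node $\gamma$ live in $\lcal(D(\gamma))$, so the transfer fact applies --- is indeed the one point that needs care, and you handle it correctly.
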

\begin{proof}
	Proof is by induction on $ d(\alpha) $.
	\begin{itemize}
		\item $ d(\alpha)=0 $.  In this case,  $ \kcal'_\alpha =\kcal_\alpha$.
		\item $ d(\alpha)=n>0 $. 
		Note that for all non-classical $ \beta>\alpha $, we have 
		$ \kcal'_\beta=(\kcal'_\alpha)_\beta $. Then  for all $ A\in\lcal(D(\beta)) $, 
\begin{center}		  
$ \kcal'_\alpha,\beta\Vdash A$ iff 
		$\kcal'_\beta,\beta\Vdash A$.
\end{center}		
		This, by induction hypothesis, implies 
\begin{center} 
$ \kcal'_\alpha,\beta\Vdash A$ iff  
		$\kcal ,\beta\Vdash A$. 		
\end{center}		
Also it's not difficult to observe that for any 
		classical node $ \beta>\alpha $, there exists some leaf
		$ \gamma\geq\beta $  (actually any leaf above $ \beta  $ works) such that for all 
		$ A\in\lcal(D(\beta)) $,  we have
\begin{center}		 
$ \kcal,\gamma\Vdash A$ iff  
		$\kcal ,\beta\Vdash A$.
\end{center}				
By use of the above mentioned facts, it is routine to prove the result by induction 
		on $ A\in\lcal(D(\alpha)) $. 
	\end{itemize}
\end{proof}

\begin{proposition}\label{ClassicalNode2}
	 For all finite-depth Kripke models \Kr, 
	 \begin{center}
	 	 $\alpha$ is a classical node  iff   $ \alpha  \Vdash {\sf PEM}_1 $.
	 \end{center}
 \end{proposition}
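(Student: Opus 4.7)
The forward direction is immediate from Proposition \ref{ClassicalNode}: a classical $\alpha$ forces every sentence of ${\sf PEM}(\lcal)$, hence in particular every sentence of ${\sf PEM}_1$. The real content is the converse, which I would prove by contraposition: assuming $\alpha$ is not classical, I will produce some $\sigma \in {\sf PEM}_1$ with $\alpha \nVdash \sigma$.

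My plan is to push the failure of classicality upward to a \emph{maximal} non-classical node $\beta \geq \alpha$, so that every $\gamma > \beta$ is classical. Such a $\beta$ exists because $\kcal$ has finite depth and $\alpha$ itself is non-classical, so the set of non-classical nodes above $\alpha$ is non-empty and admits a $\leq$-maximal element. At $\beta$, Proposition \ref{ClassicalNode} supplies an $\lcal$-sentence $\sigma = \forall \bar{x}(A \vee \neg A)$ (with $A \in \lcal$) satisfying $\beta \nVdash \sigma$. I would then transfer the refutation to the trimmed model $\kcal'_\beta$ of Definition \ref{Def-K-alpha}: because every strict successor of $\beta$ in $\kcal$ is classical, the trimming removes every internal node above $\beta$ and what remains above $\beta$ is exactly the set of leaves of $\kcal$ above $\beta$; hence $d(\kcal'_\beta) = 1$. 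Lemma \ref{Lem-Kripke-classical} then gives $\kcal'_\beta, \beta \nVdash \sigma$, exhibiting a refutation of $\sigma$ in a depth-$1$ Kripke model.

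To finish, I would observe $h(\sigma) \leq 1$ from the above refutation, while $h(\sigma) \geq 1$ is immediate because $\sigma$ is a classical tautology and so cannot be refuted in any depth-$0$ Kripke model (whose nodes are all leaves and hence classical). Thus $\sigma \in {\sf PEM}_1$. Monotonicity together with $\beta \geq \alpha$ then forces $\alpha \nVdash \sigma$, completing the contrapositive.

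The principal delicacy is the depth calculation for $\kcal'_\beta$: it rests on the maximality of $\beta$ (so that no non-leaf strict successor of $\beta$ survives the trimming), together with Proposition \ref{ClassicalNode}'s observation that all leaves are classical (ensuring $\beta$ itself is not a leaf of $\kcal$, while having at least one leaf above it by finite depth, so that $d(\kcal'_\beta) \geq 1$). Everything else is bookkeeping with Proposition \ref{ClassicalNode}, Lemma \ref{Lem-Kripke-classical}, and the definition of $h$.
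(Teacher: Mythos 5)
Your proof is correct. The forward direction is handled identically to the paper, and your converse uses the same essential machinery as the paper's proof --- the trimmed model $\kcal'_\beta$ of \Cref{Def-K-alpha}, \Cref{Lem-Kripke-classical}, and the observation that an instance of excluded middle, being a classical tautology, cannot have Kripke-rank $0$ --- but you organize it differently. The paper argues the positive direction by induction on $d(\alpha)$: assuming $\alpha\Vdash{\sf PEM}_1$, monotonicity plus the induction hypothesis make every $\beta>\alpha$ classical, so $d(\kcal'_\alpha)=1$, whence every refutable {\sf PEM} instance in $\kcal'_\alpha$ already lies in ${\sf PEM}_1$ and is forced; the lemma then transfers $\kcal'_\alpha,\alpha\Vdash{\sf PEM}$ back to $\kcal$. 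You instead prove the contrapositive by passing to a $\leq$-maximal non-classical node $\beta\geq\alpha$ (which exists by finite depth) and exhibiting a single explicit witness $\sigma=\forall\bar x(A\vee\neg A)$ with $\beta\nVdash\sigma$ and $h(\sigma)=1$, the rank computation resting on the same depth-$1$ trimmed model. The maximality argument plays exactly the role of the paper's induction --- both are ways of exploiting well-foundedness of the finite-depth order --- but your version buys a concrete element of ${\sf PEM}_1$ refuted at $\alpha$ rather than an abstract induction step, and your attention to the lower bound $h(\sigma)\geq 1$ and to why $\beta$ is not itself a leaf covers precisely the points the paper leaves implicit. I see no gap.
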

\begin{proof} 
	Left to right direction is deduced by \Cref{ClassicalNode}. 
	For the other way around, 
	we use induction on $ d(\alpha) $.
	\begin{itemize}
		\item $ d(\alpha)=0 $. That is obvious.
		\item $ d(\alpha)=n>0 $.  Since $ \alpha\Vdash {\sf PEM}_1 $, 
		for all $ \beta> \alpha $, we have $ \beta\Vdash {\sf PEM}_1 $, and by induction hypothesis $ \beta $ is classical node. This implies that $d( \kcal'_\alpha)=1 $ and
		hence no {\sf PEM} instance could be refuted in $ \kcal'_\alpha $  other than those which are in $ {\sf PEM}_1 $.  This implies that $ \kcal'_\alpha\Vdash {\sf PEM} $.
		\Cref{Lem-Kripke-classical} implies that $ \kcal,\alpha\Vdash {\sf PEM} $ and then  by \Cref{ClassicalNode}, we can deduce that $ \alpha $ is classical.
	\end{itemize}
\end{proof}

\begin{theorem}\label{main-refinement}
	Suppose {\Kr} is  a finite-depth Kripke model for the language $\lcal$. 
	For any   $\alpha\in K$, there exists some $\rho\in{\sf PEM}_1^*$ 
	such that for any sentence $A$ in $\lcal(D(\alpha))$,
\begin{center}	
$\alpha\Vdash A^{\rho}$ iff  $\alpha\models A$.
\end{center}
\end{theorem}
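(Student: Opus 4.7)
The plan is to follow the proof of \Cref{main} verbatim, replacing the choice of the pruning witness $\tau$ at the inductive step by a height-1 instance supplied by \Cref{ClassicalNode2}. I proceed by induction on $d(\alpha)$.

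For the base case $d(\alpha)=0$, the node $\alpha$ is a leaf, hence classical by \Cref{ClassicalNode}, so $\rho:=\bot\in {\sf PEM}_1^0\subseteq {\sf PEM}_1^*$ works (since $A^\bot$ is intuitionistically equivalent to $A$, and classicality gives $\alpha\Vdash A$ iff $\alpha\models A$). For the inductive step, if $\alpha$ is already classical, again take $\rho:=\bot$. Otherwise, \Cref{ClassicalNode2} produces some $\tau\in {\sf PEM}_1$ with $\alpha\nVdash\tau$, and the Pruning Lemma gives $\alpha\Vdash^\tau B$ iff $\alpha\Vdash B^\tau$ for every sentence $B$.

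Next I verify that the induction hypothesis applies to $\kcal^\tau$ at $\alpha$. All leaves above $\alpha$ in $\kcal$ are classical, hence force every instance of {\sf PEM} (in particular $\tau$), so they are removed by the pruning; therefore $d_{\kcal^\tau}(\alpha)<d(\alpha)$. Moreover, pruning does not alter the atomic diagram at $\alpha$, so the classical structure $\mathcal{F}(\alpha)$ is the same in $\kcal^\tau$ as in $\kcal$. The inductive hypothesis supplies some $\rho'\in {\sf PEM}_1^*$ with $\alpha\Vdash^\tau A^{\rho'}$ iff $\alpha\models A$ for all sentences $A$ in $\lcal(D(\alpha))$. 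Chaining through the Pruning Lemma and \Cref{FriedmanAssociative} yields $\alpha\models A$ iff $\alpha\Vdash (A^{\rho'})^\tau$ iff $\alpha\Vdash A^{(\rho'^\tau)}$. Setting $\rho:=\rho'^\tau$ and picking $k$ with $\rho'\in {\sf PEM}_1^k$, the definition of ${\sf PEM}_1^{k+1}$ gives $\rho\in {\sf PEM}_1^{k+1}\subseteq {\sf PEM}_1^*$, closing the induction.

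The only real content beyond \Cref{main} is the appeal to \Cref{ClassicalNode2}, which is precisely what is needed to guarantee that the witness $\tau$ lives in ${\sf PEM}_1$ rather than merely in ${\sf PEM}$. Everything else (termination of the pruning, associativity, preservation of the classical structure under pruning) is inherited from the proof of \Cref{main}. The only ``obstacle'' is conceptual: one must be careful that at the inductive step, the reduced model $\kcal^\tau$ is itself finite-depth so that \Cref{ClassicalNode2} remains applicable inside the induction, which is automatic since depth strictly decreases.
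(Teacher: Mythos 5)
Your proof is correct and is exactly the paper's intended argument: the paper's own proof of this theorem simply says to repeat the proof of \Cref{main}, invoking \Cref{ClassicalNode2} so that the pruning witness $\tau$ can be chosen in ${\sf PEM}_1$, which is precisely what you have written out in detail.
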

\begin{proof}
The same proof of \Cref{main} works here, by using \Cref{ClassicalNode2} and  replacing  $ {\sf PEM} $  by  $ {\sf PEM}_1 $.
\end{proof}

\section{Some applications}

Now we state some applications of \Cref{main-refinement}:

\begin{corollary}\label{Corl-main}
Let $T$ be a theory which is closed under Friedman's translation  
$(.)^\rho$, for any $\rho\in{\sf PEM}_1$. 
Then 
any finite-depth Kripke model of $T$ is locally $T$.  
\end{corollary}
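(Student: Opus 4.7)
My plan is to combine \Cref{main-refinement} with a short induction to promote the hypothesized closure of $T$ from $\mathsf{PEM}_1$ to $\mathsf{PEM}_1^*$. Fix an arbitrary node $\alpha$ of a finite-depth Kripke model $\kcal$ of $T$; my goal is $\mathcal{F}(\alpha)\models T$. First I apply \Cref{main-refinement} to extract a localizer $\rho\in \mathsf{PEM}_1^*$ at $\alpha$, so that for every sentence $A\in \lcal(D(\alpha))$,
\[
\alpha\Vdash A^\rho \ \Longleftrightarrow\ \alpha\models A.
\]
Thus it suffices to show $\alpha\Vdash A^\rho$ for every $A\in T$. Because $\kcal$ is a Kripke model of $T$ (so $\alpha$ forces all theorems of $T$), this in turn reduces to showing $T\vdash A^\rho$ for every $A\in T$, that is, $T$ is closed under the single Friedman translation $(.)^\rho$.

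The key step is then the following lemma, proved by induction on the least $k$ with $\rho\in \mathsf{PEM}_1^k$: if $T$ is closed under $(.)^\tau$ for every $\tau\in \mathsf{PEM}_1$, then $T$ is closed under $(.)^\rho$ for every $\rho\in \mathsf{PEM}_1^*$. For the base case $k=0$ we have $\rho=\bot$, and atom by atom, $B^\bot=B\vee\bot\leftrightarrow B$ intuitionistically, so $A^\bot\leftrightarrow A$ and the closure is trivial. For the inductive step, write $\rho=\sigma^\tau$ with $\sigma\in \mathsf{PEM}_1^{k-1}$ and $\tau\in \mathsf{PEM}_1$; associativity of Friedman's translation (\Cref{FriedmanAssociative}) gives
\[
A^\rho = A^{(\sigma^\tau)} \ \leftrightarrow\ (A^\sigma)^\tau,
\]
so from $T\vdash A$ we get $T\vdash A^\sigma$ by induction hypothesis, then $T\vdash (A^\sigma)^\tau$ by the assumed closure on $\mathsf{PEM}_1$, and hence $T\vdash A^\rho$.

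Putting the pieces together: for every $A\in T$ we obtain $T\vdash A^\rho$, therefore $\alpha\Vdash A^\rho$, and therefore $\alpha\models A$ by the localizer property; since $\alpha$ was arbitrary, $\kcal$ is locally $T$. I do not expect a real obstacle here; the only mild subtlety is to verify that the one-step closure hypothesis on $\mathsf{PEM}_1$ does propagate through iterated translations, and that is precisely what the associativity lemma is designed to do.
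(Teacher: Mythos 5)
Your proposal is correct and follows essentially the same route as the paper: apply \Cref{main-refinement} to get a localizer $\rho\in{\sf PEM}_1^*$, lift the closure of $T$ from ${\sf PEM}_1$ to ${\sf PEM}_1^*$ via \Cref{FriedmanAssociative}, and conclude $\alpha\Vdash T^\rho$, hence $\alpha\models T$. The only difference is that you spell out the induction behind the paper's one-line appeal to ``iterated use'' of associativity, which is a welcome but inessential elaboration.
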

\begin{proof}
First note that  by iterated use of \Cref{FriedmanAssociative}, $T$ is closed under Friedman's translation $(.)^\rho$, for any $\rho\in{\sf PEM}_1^*$. Let {\Kr} be a finite-depth Kripke model for $T$ and 
$\alpha\in K$. By 
\Cref{main-refinement}, we can find a sentence $\rho\in {\sf PEM}_1^*$ such that  for each sentence $\varphi$ in the language $\lcal(D(\alpha))$, we have 
$$\alpha\Vdash\varphi^{\rho} \Longleftrightarrow \alpha\models\varphi$$ 
Since $\alpha\Vdash T$ and $T\vdash T^\rho$, then  $\alpha\Vdash T^\rho$. Hence $\alpha\models T$.
\end{proof}
Now we can deduce a result first appeared in \cite{DKMV}:
\begin{remark}
	{\sf HA} is closed under arbitrary Friedman's translation, by proposition \ref{friedman}, hence every finite-depth  Kripke model of $\HA$ is locally {$\PA$}.
\end{remark}

\begin{corollary}
	Let $T$ be a theory over the language $ \lcal  $ 
	which is closed under the translation $ ( )^\forall $ and 
	$ {\sf PEM} $-Friedman's translation.	Then 
	any  semi-narrow Kripke model of $T$ with tree frame is locally  $T$.  
\end{corollary}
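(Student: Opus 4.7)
The plan is to mimic the proof of \Cref{Corl-main}, substituting \Cref{theorem3} for \Cref{main-refinement}. Fix a semi-narrow Kripke model $\kcal=(K,\leq,D,\Vdash)$ of $T$ with tree frame and an arbitrary node $\alpha\in K$; the goal is to establish $\alpha\models T$. By \Cref{theorem3} applied to $\kcal$ and $\alpha$, I can extract a sentence $\rho\in {\sf PEM}_{\sf sen}(\lcal(D(\alpha)))^*$ for which
\[
\alpha\Vdash (A^\forall)^\rho \quad\text{iff}\quad \alpha\models A,
\]
for every sentence $A\in \lcal(D(\alpha))$. Hence it suffices to verify the left-hand side for every axiom of $T$.

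As an intermediate step I would promote the hypothesis that $T$ is closed under $\rho$-Friedman's translation for $\rho\in{\sf PEM}$ to closure under $\rho$-Friedman's translation for every $\rho\in{\sf PEM}^*$. This is proven by induction on the natural number $n$ with $\rho\in{\sf PEM}^n$: the base $n=0$ is trivial since $\rho=\bot$ gives $A^\rho\leftrightarrow A$; for the inductive step, write $\rho=\sigma^\tau$ with $\sigma\in{\sf PEM}^{n-1}$ and $\tau\in{\sf PEM}$. If $T\vdash A$, then by induction $T\vdash A^\sigma$, and by closure under $\tau$-Friedman's translation $T\vdash (A^\sigma)^\tau$, which by the associativity \Cref{FriedmanAssociative} is provably equivalent to $A^{\sigma^\tau}=A^\rho$.

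Now for any axiom $A$ of $T$: closure under $(\cdot)^\forall$ gives $T\vdash A^\forall$, and the previous step (applied with the $\rho$ supplied by \Cref{theorem3}) gives $T\vdash (A^\forall)^\rho$. Since $\alpha\Vdash T$, monotonicity of forcing yields $\alpha\Vdash (A^\forall)^\rho$, whence $\alpha\models A$ by the equivalence above. This proves $\alpha\models T$, and since $\alpha$ was arbitrary, the Kripke model is locally $T$.

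I do not expect any serious obstacle here: the genuinely new content was packaged into \Cref{theorem3}, and what remains is the routine bookkeeping that the two closure hypotheses on $T$ combine with the associativity of Friedman's translation to cover the composite translation $(\cdot)^\forall$ followed by $(\cdot)^\rho$ for $\rho\in{\sf PEM}^*$.
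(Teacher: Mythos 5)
Your proposal is correct and follows essentially the same route as the paper: invoke \Cref{theorem3} to get the localizer $\rho\in{\sf PEM}_{\sf sen}(\lcal(D(\alpha)))^*$, lift closure under ${\sf PEM}$-Friedman's translation to ${\sf PEM}^*$ via \Cref{FriedmanAssociative}, and conclude $\alpha\Vdash (T^\forall)^\rho$, hence $\alpha\models T$. (Only a cosmetic quibble: the step from $T\vdash (A^\forall)^\rho$ and $\alpha\Vdash T$ to $\alpha\Vdash (A^\forall)^\rho$ is soundness of intuitionistic deduction for Kripke forcing, not monotonicity.)
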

\begin{proof}
	Let $\kcal,\alpha \Vdash T $ be a semi-narrow Kripke model with tree frame. 
	By \Cref{theorem3},  there is some $ \rho\in {\sf PEM}(\lcal(D(\alpha)))^* $ such that  for all $ A\in\lcal(D(\alpha)) $, we have $ \alpha\Vdash (A^\forall)^\rho $ iff 
	$ \alpha\models A $. Since $ \alpha\Vdash T $ and $ T $ is closed under 
	$ {\sf PEM} $-Friedman's translation and 
	$ ()^\forall $ translation, we have $ \alpha\Vdash (T^\forall)^\rho $. Hence 
	$ \alpha\models T $.
\end{proof}
\begin{remark}\label{remark0}
{\em Since $ \HA $ is closed under $ {\sf PEM} $-Friedman's translation and 
$ ()^\forall $ translation (\cite{AH2}), we can deduce from the above Corollary 
that all semi-narrow Kripke models of $ \HA $ are locally $ \PA $.}
\end{remark}

For any sets $ \Gamma $ and $ \Delta $  of formulas in $ \lcal$,
let $ {\Gamma}^\Delta:=
\{A^B: A\in\Gamma,\ B\in \Delta\}$. 
Then we have:
\begin{enumerate}
	\item   $ \Gamma^{\Delta^*} $ is closed under 
	            the ${\Delta}$-Friedman's translation, i.e. for any $ A\in{\Gamma}^{\Delta^*} $ and $ B\in{\Delta} $,  $ A^B $
	            is intuitionistically equivalent to some formula in $ {\Gamma}^{\Delta^*} $,
	  \item $ \Gamma^{\Delta^*} $ is the closure of $ \Gamma $ under 
	  the ${\Delta}$-Friedman's translation, i.e.  $ \Gamma^{\Delta^*} $ is the minimum set $ X\supseteq\Gamma $ such that for all $ A\in X $ and 
	  $ B\in \Delta $ we have $ A^B\in X $,
	 \item  $i{\Gamma}^{\Delta^*}$ is closed under  ${\Delta}$-Friedman's translation.
\end{enumerate}
 The first item can be deduced easily by use of \Cref{FriedmanAssociative}, and the third item is a consequence of the first one.  Second item is straightforward.
 We have the following facts:
\begin{equation}\label{eq2}
 i\Gamma\quad \subseteq \quad i\Gamma^{\sf PEM_1^*}  
 \quad \subseteq \quad i\Gamma^{\sf PEM^*}  
 \quad \subseteq \quad   i\Gamma^{\sf Sen^*}=i\Gamma^{\sf Sen}
 \end{equation}
in which $ {\sf Sen} $ is the set of all sentences in the language of arithmetic $ \lcal_{\sf a} $.  
Since for every set $ \Gamma  $ of formulas,  $ \bot\in \Gamma^* $,  
if we let $ \Gamma $ as the set of all formulas in $ \lcal_{\sf a} $, 
all above theories are the same and equal to $ \HA $. 

\begin{question}
	 In case $ \Gamma=\Sigma_n $, $ \Gamma=\Pi_n $ or  $ \Gamma=\Phi_n $   {\em (}definition comes next{\em )}, are  the  inclusions of \cref{eq2} strict?
\end{question}

Let us recall that $\PA^-$ indicates the set of axioms for non-negative discretely ordered ring as stated in \cite{Kaye}. Let $ \Gamma $ be an arbitrary set of formulas. Then
 $\text{I}\Gamma$  ($i\Gamma$)
is the  $\vdash^c$-closure ($ \vdash $-closure) of $\PA^-$ plus
induction principle for arbitrary formulas in $ \Gamma $. $\PA$ and $\HA$ 
are $\text{I}\lcal_{\sf a}$ and  $i\lcal_{\sf a}$, respectively.
The Burr's classes $\Phi_n$ of  formulas in $\lcal_{\sf a}$ (\cite{Burr}), are defined inductively by the following items:
\begin{itemize}
	\item $ \Phi_0:=\{A\in\lcal_{\sf a}: \ A\text{  is open}\} $,
	\item $\Phi_1:= \{\exists \bar{x} \, A:\ A\in\Phi_{0} \}$,  ($ \bar{x} $ means a list of variables)
	\item $ \Phi_n:= \{\forall\,\bar{x} (B\to \exists\,\bar{y}C): \ B\in\Phi_{n-1}, \ C\in\Phi_{n-2} \} \cup \Phi_{n-1}$, for $ n\geq 2 $.
\end{itemize}
Some interesting facts about Burr's classes of formulas are  
\begin{itemize}
	\item Every formula  in $ \lcal_{\sf a} $ is equivalent (in $ i\Sigma_1 $ and even weaker theories) to a formula  in some $ \Phi_n $, 
	\item For $ n\geq 2 $, every formula in $ \Phi_n $
	is classically equivalent to some 	$ \Pi_n $ formula,
	\item For every $ n\geq 2 $, $ \text{I}\Pi_n $ is $ \Pi_2 $-conservative over 
	$ i\Phi_n $.
\end{itemize}
These properties make the Burr's fragments $i\Phi_n$ as {\em natural} fragments of $\HA$.

\begin{corollary}
Burr's hierarchies  of $\HA$, $i\Phi_n$ 
are not closed under ${\sf PEM}_1 $-Friedman's translation, i.e.   
  there exists a formula $A$
 such that $i\Phi_n\vdash A$ but 
 $i\Phi_n\nvdash A^\rho$ and $\rho\in {\sf PEM}_1$.
\end{corollary}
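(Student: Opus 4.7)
The plan is to use \Cref{Corl-main} contrapositively. Assume for contradiction that $i\Phi_n$ is closed under $(\cdot)^\rho$ for every $\rho\in{\sf PEM}_1$; then \Cref{Corl-main} gives that every finite-depth Kripke model of $i\Phi_n$ is locally $i\Phi_n$. Since the axioms of $i\Phi_n$ are ordinary first-order formulas, classical satisfaction of them at a node coincides with the classical theory $\text{I}\Phi_n$; thus every finite-depth Kripke model of $i\Phi_n$ would be locally $\text{I}\Phi_n$. The task therefore reduces to exhibiting a finite-depth Kripke model of $i\Phi_n$ that is not locally $\text{I}\Phi_n$.

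For this I would build a two-level tree $\kcal$ with a non-classical root $\alpha_0$ whose classical component $\mathfrak{M}=\mathfrak{M}_{\alpha_0}$ is a non-standard model failing some classical $\Phi_n$-induction instance, while the leaves carry a classical end-extension $\mathfrak{N}\supseteq\mathfrak{M}$ with $\mathfrak{N}\models\text{I}\Phi_n$. The key verification is that $\alpha_0\Vdash i\Phi_n$: by monotonicity and the classicality of the leaves, forcing of each $\Phi_n$-induction instance at $\alpha_0$ reduces to the observation that any would-be failure of the conclusion visible in $\mathfrak{M}$ is absorbed upon passage to $\mathfrak{N}$, so the intuitionistic conditional is forced while its classical counterpart is refuted at $\mathfrak{M}$. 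A suitable pair $(\mathfrak{M},\mathfrak{N})$ can be obtained by a MacDowell--Specker style end-extension argument tailored to $\Phi_n$-definable sets, or by adapting the narrow-tree construction of~\cite{AH2}. With such a $\kcal$ in hand, \Cref{Corl-main} would force $\mathfrak{M}\models\text{I}\Phi_n$, contradicting the choice of $\mathfrak{M}$.

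To extract the explicit counterexample demanded by the statement, pick a $\Phi_n$-induction instance $A$ that fails classically at $\mathfrak{M}$; then $i\Phi_n\vdash A$, and by \Cref{main-refinement} applied at $\alpha_0$ there is some $\rho\in{\sf PEM}_1^*$ with $\alpha_0\nVdash A^\rho$, hence $i\Phi_n\nvdash A^\rho$. Using \Cref{FriedmanAssociative}, $A^\rho$ is intuitionistically equivalent to an iterated translation $(((A^{\rho_k})^{\rho_{k-1}})\cdots)^{\rho_1}$ with each $\rho_i\in{\sf PEM}_1$; some step of this iteration must fail to be $i\Phi_n$-provable, yielding a single $\rho_0\in{\sf PEM}_1$ and a theorem $B$ of $i\Phi_n$ (the last $i\Phi_n$-provable intermediate iterate) with $i\Phi_n\nvdash B^{\rho_0}$, as required.

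The main technical obstacle throughout is the construction of $(\mathfrak{M},\mathfrak{N})$ together with the verification that $\alpha_0\Vdash i\Phi_n$ while $\mathfrak{M}\not\models\text{I}\Phi_n$: intuitionistic forcing of the $\Phi_n$-induction schema at a non-classical root is delicate, depending sensitively on the quantifier alternation encoded in $\Phi_n$ and on the end-extension preserving the relevant complexity classes of formulas, and for general $n$ this seems to require a genuinely arithmetic construction rather than a purely Kripke-theoretic one.
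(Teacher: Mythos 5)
Your overall strategy is the same as the paper's: take the contrapositive of \Cref{Corl-main}, so that it suffices to exhibit a finite-depth Kripke model of $i\Phi_n$ that is not locally $\text{I}\Phi_n$. Your final paragraph, extracting a single $\rho_0\in{\sf PEM}_1$ and an intermediate iterate $B$ from the failure of $i\Phi_n\vdash A^\rho$ for $\rho\in{\sf PEM}_1^*$, is a correct (and slightly more explicit than the paper's) unwinding of that contrapositive.

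The genuine gap is that you never actually produce the required Kripke model. The entire mathematical content of the corollary is the existence of a finite(-depth) Kripke model of $i\Phi_n$ whose root's classical structure fails some $\Phi_n$-induction instance; the paper imports exactly this from Po{\l}acik~\cite{Po}, where such models are constructed via partially-elementary extensions. You instead sketch a two-level tree built from a pair $(\mathfrak{M},\mathfrak{N})$ with $\mathfrak{M}\not\models\text{I}\Phi_n$, $\mathfrak{N}\models\text{I}\Phi_n$ an end-extension, and $\alpha_0\Vdash i\Phi_n$, and then you explicitly concede that verifying $\alpha_0\Vdash i\Phi_n$ while $\mathfrak{M}\not\models\text{I}\Phi_n$ is an unresolved ``main technical obstacle.'' That verification is not a routine detail: forcing the $\Phi_n$-induction schema at a non-classical root depends delicately on which formula classes the extension $\mathfrak{M}\subseteq\mathfrak{N}$ preserves, and a generic MacDowell--Specker end-extension gives no such guarantee. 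As written, the proof assumes the existence of the very object whose existence carries the theorem. To close the gap you should either cite~\cite{Po} for the existence of these models, or carry out the construction in full.
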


\begin{proof}
From \cite{Po}, we know that for each $n$, we can find a finite Kripke model for  $i\Phi_n$ such that it is not locally a model of $i\Phi_n$. Now by the previous corollary, we have the desired result.
\end{proof}

\section*{Acknowledgement}
 The author of this paper is thankful from Mohammad Ardeshir, for his valuable comments and remarks.

\providecommand{\bysame}{\leavevmode\hbox to3em{\hrulefill}\thinspace}
\providecommand{\MR}{\relax\ifhmode\unskip\space\fi MR }
\providecommand{\MRhref}[2]{%
	\href{http://www.ams.org/mathscinet-getitem?mr=#1}{#2}
}
\providecommand{\href}[2]{#2}



\end{document}